\documentclass{amsart}
\usepackage{bbm}
\usepackage{graphicx}
\usepackage{verbatim}
\usepackage{stackrel}
\usepackage{color}
\graphicspath{{Fig/}}
\usepackage{amsmath}
\usepackage{amssymb}
\usepackage{array}
\usepackage{mathtools}
\usepackage{xcolor}
\usepackage{mathdots}
\usepackage{pgf}
\usepackage{bm}
\usepackage{hyperref}
\usepackage{CJKutf8}
\usepackage{mathrsfs}
\usepackage[thicklines]{cancel}

\usepackage{enumerate}

\usepackage{enumitem}

\usepackage{tikz}
\usetikzlibrary{shapes.arrows, fadings}
\usetikzlibrary{arrows}
\usetikzlibrary{matrix}
\usetikzlibrary{shapes,intersections}

\hypersetup{
    colorlinks,
    linkcolor={blue},
    citecolor={red},
}

\newtheoremstyle%
 {bluethm}%
 {}{}%
 {\color{blue!50!black!80!}\itshape}
 {}%
 {\color{blue!50!black!80!}\bfseries}%
 {\color{blue!50!black!80!}.}%
 { }{}

\newtheoremstyle%
 {greenthm}%
 {}{}%
 {\color{green!50!black!100!}\itshape}
 {}%
 {\color{green!50!black!100!}\bfseries}%
 {\color{green!50!black!100!}.}%
 { }{}

 \newtheoremstyle%
 {redthm}%
 {}{}%
 {\color{red}\itshape}
 {}%
 {\color{red}\bfseries}%
 {\color{red}.}%
 { }{}
\newtheorem{theorem}{Theorem}
\newtheorem{prop}{Proposition}
\newtheorem{lemma}{Lemma}

\newtheorem{example}{Example}

\theoremstyle{definition}

\newtheorem{defi}{Definition}

\newtheorem{remark}{Remark}

\newtheorem{assumption}{Assumption}[part]
\renewcommand{\theassumption}{\arabic{assumption}}

\def\N{{\mathbb N}}
\def\R{{\mathbb R}}

\newcommand{\diff}{\mathop{}\mathopen{}\mathrm{d}}
\newcommand\croc[1]{\left\langle #1\right\rangle}
\newcommand\ind[1]{\mathbbm{1}_{\left\{#1\right\}}}

\def\eps{\varepsilon}

\makeatletter
\newcommand{\proofstep}[1]{%
  \par
  \addvspace{\medskipamount}
  \textit{#1\@addpunct{.}}\enspace\ignorespaces
}
\makeatother

\title[Pathwise LDP in gelation]{Large-deviation bounds for cluster coagulation models with gelation}

\author{Mingcong Shi}
\email{smcandgzj@mail.ustc.edu.cn}

\author{Wen Sun}
\email{wensun.ustc@gmail.com}
\address        {School of Mathematical Sciences, University of Science and Technology of China, Jinzhai 96, 230026 Hefei}

\date{\today}
\keywords{Coagulation; gelation; Smoluchowski equation; cluster coagulation; Flory equation; large-deviation bounds.}

\begin{document}

\maketitle

We establish pathwise large-deviation bounds for the empirical-measure flow
of a cluster coagulation model with gelling kernels on a locally compact Polish
state space.  The cluster coagulation process is a measure-valued Markov jump
process in which pairs of clusters merge according to a time-inhomogeneous
kernel $K(t,x,y,\diff z)$.  Kernels with sufficiently rapid growth may
produce gelation: a phase transition in which mass escapes from the
finite-cluster population in finite time.  We first prove tightness of the empirical
flows and identify every subsequential limit as a solution of the
time-inhomogeneous multi-type Flory equation, which accounts for 
post-gelation dynamics through a conserved quantity.  We then establish
a global large-deviation upper bound on path space with a variational rate
function.  We also prove
a local lower bound near paths that are unique solutions of suitable tilted
Flory equations.  Finally, we give a convex-dual representation of the upper
rate function, including the possible singular part of the representing
measure.  Our results apply to locally compact
Polish state spaces with continuous mass functions, encompassing both 
the gelling and non-gelling regimes within a unified framework.

\bigskip

\section{Introduction}\label{section 1}
\enlargethispage{4pt}

Building on the theory of cluster coagulation and gelation developed in
\cite{norris1999,norris2000,Fournier2004,FOURNIER2009,
AIMconvergence}, we establish pathwise large-deviation bounds for
a time-inhomogeneous cluster coagulation process in the presence of
gelation.  The state space is a locally compact Polish space, and a
cluster may therefore carry attributes in addition to its mass.  The
macroscopic dynamics are described by a multi-type Flory equation whose
additional loss term records interactions between finite clusters and
the gel through a conserved quantity.  We prove a global
large-deviation upper bound on path space and a local pathwise lower
bound near trajectories that are unique solutions of suitable tilted
Flory equations.

\subsection{Coagulation, gelation, and the Flory equation}

The Smoluchowski coagulation equation~\cite{Smoluchowski} is the
classical mean-field model for a population of particles undergoing
successive binary mergers.  In its simplest form, particles of masses
$x$ and $y$ merge at rate $K(x,y)$ and produce a particle of mass
$x+y$.  For kernels with sufficiently rapid growth, solutions may lose
mass after a finite time.  This phenomenon, known as \emph{gelation},
represents the formation of clusters whose size is macroscopic relative
to the scale on which the finite-particle distribution is observed; see,
for example,
\cite{ziff1980,vandongen_ernst_1986,jeon1998,
escobedo_mischler_perthame_2002,Laurencot2015}.

The corresponding finite-particle model is the Marcus--Lushnikov
process~\cite{marcus1968,lushnikov1978}.  Starting with finitely many
particles, each unordered pair merges at a rate proportional to
$K(x,y)$ and inversely proportional to the system size.  Hydrodynamic
limits of this process have been studied under many assumptions; see
\cite{Aldous1999,norris1999,CEPEDA20111411} and the references therein.
In a gelling regime, however, the usual Smoluchowski equation does not
describe all post-gelation interactions: a macroscopic cluster can
continue to absorb finite clusters.  The resulting correction leads to
the Flory equation.  For classical mass coagulation with a
time-homogeneous kernel, a representative weak form is
\begin{equation}\label{eq:flory-intro-classical}
\begin{aligned}
\langle g,\mu_t\rangle
={}&\langle g,\mu_0\rangle
+\frac12\int_0^t\!\int_{[0,\infty)^2}
  [g(x+y)-g(x)-g(y)]K(x,y)
  \mu_s(\diff x)\mu_s(\diff y)\diff s\\
&-\int_0^t\!\int_{[0,\infty)}g(y)\,l(y)
  \bigl(\langle x,\mu_0\rangle-\langle x,\mu_s\rangle\bigr)
  \mu_s(\diff y)\diff s.
\end{aligned}
\end{equation}
Here $l(y)$ is the asymptotic rate at which a finite cluster of mass $y$
interacts with a large cluster.  The second line describes absorption of
finite clusters by the gel.  Such equations and their stochastic
approximations were studied in
\cite{norris2000,Fournier2004,FOURNIER2009}; see also
\cite{reza} for stochastic gelation estimates.

The cluster coagulation model permits cluster types in a general state
space $E$.  A kernel $K(t,x,y,\diff z)$ specifies both the rate at which
clusters $x$ and $y$ merge and the distribution of the resulting type
$z$.  Following Andreis, Iyer, and Magnanini~\cite{AIMconvergence}, the
post-gelation correction can be described by a conserved quantity
$\phi$.  In the present time-inhomogeneous setting, the defining
identity is required to hold along the entire evolution: for all
$r,s\in[0,T]$,
\[
\phi(s,z,q)=\phi(s,x,q)+\phi(s,y,q)
\quad\text{for }K(r,x,y,\cdot)\text{-a.e.\ }z.
\]
The parameter $s$ labels the conserved family, whereas $r$ is the actual
time at which a coagulation occurs.  The limiting weak equation is
\begin{equation}\label{eq:Flory-intro}
\begin{aligned}
\langle g,\mu_t\rangle
={}&\langle g,\mu_0\rangle
+\frac12\int_0^t\!\int_{E^2\times E}
L[g](x,y,z)\,K(s,x,y,\diff z)\,
\mu_s(\diff x)\mu_s(\diff y)\diff s\\
&-\int_0^t\!\int_E g(y)
\left[\int_E\phi(s,x,y)(\mu_0-\mu_s)(\diff x)\right]
\mu_s(\diff y)\diff s,
\end{aligned}
\end{equation}
where $L[g](x,y,z)=g(z)-g(x)-g(y)$.  The bracket in the second line is
nonnegative along a Flory solution and measures the conserved quantity
that has left the finite-cluster population.

Time-inhomogeneity is useful even when the original physical kernel is
time independent.  Large-deviation changes of measure replace $K$ by
tilted kernels whose densities depend on time, and a proof that is stable
under such tilts naturally requires a time-dependent formulation.

\subsection{Main results}

Let $E$ be a locally compact Polish space, let
$\bar E=E\cup\{\infty\}$ be its one-point compactification, and put
$X_T=[0,T]\times\bar E^2\times\bar E$.  The empirical measure of the
finite system is
\[
\mu_t^N=\frac1N\sum_{i=1}^{n_t}\delta_{x_i(t)}.
\]
The process and its generator are defined precisely in
Definition~\ref{def:process}. Assumption~\ref{assu:kernel} imposes
continuity, growth control, proximity, uniform compact-support properties, and
uniform tail estimates.  Assumption
\ref{assu:initial} supplies convergence and compact containment for the
initial measures.

For a path $\pi$, define the finite-cluster coagulation measure together
with its gel correction by
\begin{equation}\label{eq:Lambda-intro}
\begin{aligned}
\langle f,\Lambda_\pi^{K,T}\rangle
={}&\frac12\int_0^T\!\int_{E^2\times E}
 f(t,x,y,z)\,K(t,x,y,\diff z)\,
 \pi_t(\diff x)\pi_t(\diff y)\diff t\\
&+\int_0^T\!\int_E f(t,\infty,y,\infty)
 \left[\int_E\phi(t,x,y)(\pi_0-\pi_t)(\diff x)\right]
 \pi_t(\diff y)\diff t.
\end{aligned}
\end{equation}
For paths in the set $\mathscr A_\phi$ introduced in Section
\ref{section 2}, the two terms are absolutely integrable and the
right-hand side defines a finite positive measure.  If $g$ extends
continuously to $\bar E$, equation~\eqref{eq:Flory-intro} can be written
compactly as
\[
\langle g,\mu_t-\mu_0\rangle
=\langle L[\bar g],\Lambda_\mu^{K,t}\rangle.
\]

Our first result is a subsequential hydrodynamic limit.  Theorem
\ref{thm:LLN} proves tightness of $(\mu^N)$ and shows that every
subsequential limit is almost surely a continuous solution of the
time-inhomogeneous multi-type Flory equation.

For the upper bound, let
\[
\mathscr C_{\bar E}^{1,0}
=\left\{g\in\mathscr C_b^{1,0}([0,T]\times E):
g\text{ has an extension }\bar g\in
\mathscr C_b^{1,0}([0,T]\times\bar E)\right\}.
\]
The variational upper rate is
\begin{equation}\label{eq:rate-upper-intro}
\begin{aligned}
\mathcal R_{\rm upper}^K(\pi)
=\sup_{g\in\mathscr C_{\bar E}^{1,0}}
\bigg\{&\langle g_T,\pi_T\rangle-\langle g_0,\pi_0\rangle
-\int_0^T\langle\partial_tg_t,\pi_t\rangle\diff t\\
&-\langle e^{L[\bar g]}-1,\Lambda_\pi^{K,T}\rangle\bigg\}.
\end{aligned}
\end{equation}
Theorem~\ref{thm:LDP upper} states that, for every closed subset $C$ of
the path space,
\[
\limsup_{N\to\infty}\frac1N\log\mathbb P(\mu^N\in C)
\leq-\inf_{\pi\in C}
\{\iota_\nu(\pi_0)+\mathcal R_{\rm upper}^K(\pi)\}.
\]
The proof uses exponential martingales.  To establish exponential
tightness, we verify exponential compact containment and an Aldous
stopping-time condition at the exponential scale.

The lower bound is deliberately stated on a smaller class.  A strictly
positive bounded continuous function $\eta\in\mathscr C_b(X_T)$,
symmetric in $(x,y)$, tilts the finite-cluster kernel to $\hat\eta K$
and replaces the conserved quantity $\phi$ by
$\eta(t,\infty,y,\infty)\phi(t,x,y)$.  Such tilts preserve the standing
assumptions.  If a path $\pi$ is the unique solution of the resulting
tilted Flory equation, then every open neighborhood $O$ of $\pi$
satisfies
\begin{equation}\label{eq:rate-lower-intro}
\liminf_{N\to\infty}\frac1N\log\mathbb P(\mu^N\in O)
\geq-\langle\tau^*(\eta-1),\Lambda_\pi^{K,T}\rangle,
\end{equation}
where
\[
\tau^*(u)=
\begin{cases}
(1+u)\log(1+u)-u,&u>-1,\\
1,&u=-1,\\
+\infty,&u<-1.
\end{cases}
\]
Taking the infimum over all such uniqueness-admissible tilts gives
Theorem~\ref{thm:LDP lower}.  Thus the lower bound holds locally near
unique tilted solutions.  Existing uniqueness criteria, including the
eventual-conservativity condition in
\cite[Theorem~2.4]{AIMconvergence}, can be used to verify this hypothesis
in particular models; the assumptions of this paper alone do not imply
it in general.

Finally, Theorem~\ref{thm:another representation} identifies the convex
dual structure of the upper rate.  Because the dual of
$\mathscr C(X_T)$ consists of signed Radon measures, the general formula
allows a nonpositive component singular with respect to
$\Lambda_\pi^{K,T}$.  If the optimizing measure is absolutely continuous,
with density $z$, the representation reduces to the familiar entropy
form
\[
\mathcal R_{\rm upper}^K(\pi)
=\inf_z\langle\tau^*(z),\Lambda_\pi^{K,T}\rangle
\]
subject to the linear weak-evolution constraint specified in that
theorem.  Keeping the possible singular part in the general statement is
necessary for the conjugacy formula on continuous test functions.

\subsection{Related work and proof strategy}

Pathwise large deviations for interacting particle systems are commonly
proved through exponential martingales and changes of measure; classical
examples include exclusion and reaction--diffusion systems
\cite{kipnis1989hydrodynamics,jona1993}.  General treatments for
mean-field systems include
\cite{dawgar,djehiche1995,leonard1995large}.  For pure-jump
$k$-nary interactions, including nongelling coagulation models, a
martingale-measure approach was developed in \cite{ldpsun}.  The present
work adapts that approach to a setting in which a conserved quantity can
escape to the gel and re-enter the finite-particle dynamics through the
Flory correction.

A different large-deviation description of coagulation was obtained by
Andreis, K\"onig, Langhammer and Patterson~\cite{andreis2019largedeviations} using
empirical measures of coagulation histories and a Gibbs representation.
Their state variable and rate functional differ from the empirical
measure flow considered here.  Our use of the conserved-quantity
framework is based on the hydrodynamic theory of
\cite{AIMconvergence}.

Relative to these works, the present contribution has three distinct
parts.  First, the compactified measure $\Lambda_\pi^{K,T}$ places finite
coagulations and gel interactions in a common variational framework.
Second, the upper bound is global on path space and is supported by an
exponential-tightness argument for gelling kernels satisfying the stated
assumptions.  Third, the
change-of-measure argument gives a
lower bound near uniqueness-admissible tilted paths.  The third statement
is intentionally local: without a general uniqueness theorem for the
tilted Flory equation, these results do not constitute a full matching
large-deviation principle.

Two estimates drive the proofs.  First, moment bounds define a closed
set $\mathscr D_T^a$ containing every sufficiently large finite system.
Along uniformly weakly convergent sequences with a continuous limit and
the required initial-data compatibility, the map
\[
\pi\longmapsto\langle f,\Lambda_\pi^{K,t}\rangle
\]
is continuous uniformly in $t$.  The proof separates compact
coagulations, one-large/one-compact interactions, two-large interactions,
and the two parts of the gel correction.  The uniform compact-support
condition in Assumption~\ref{assu:kernel-4} is used for the compact part,
while proximity and tail assumptions control the other pieces.

Second, a marked counting measure retains the complete event
$(x,y,z)$.  Its compensator yields both the exponential martingales used
for the upper bound and the Radon--Nikodym density used for the lower
bound.
A diagonal estimate removes the difference between ordered product
measures and the finite-particle pair measure.  Under the tilted law,
the hydrodynamic theorem and uniqueness force concentration at the
selected path; reversing the change of measure then gives
\eqref{eq:rate-lower-intro}.

\subsection{Notation and organization}

For a Polish space $Y$, let $\mathscr M_f^+(Y)$ and $\mathscr M_f(Y)$
denote, respectively, the spaces of finite nonnegative Borel measures
and finite signed Radon measures, equipped with the weak topology.  We
write $\langle f,\mu\rangle=\int f\,\diff\mu$,
$\mathscr C_b(Y)$ for bounded continuous functions, and
$\mathscr C_c(Y)$ for continuous compactly supported functions.  The
spaces of continuous and c\`adl\`ag paths on $[0,T]$ are denoted by
$\mathscr C_T(Y)$ and $\mathscr D_T(Y)$, respectively; the latter carries
the $J_1$ Skorokhod topology.  Although coagulating pairs are unordered,
we write $E^2$ and attach an independent fair orientation mark to each
coagulation event when an ordered representation is needed.

Section~\ref{section 2} defines the process, the Flory equation, the
assumptions, and the rate functions.  Section~\ref{section 3} proves the
moment bounds, continuity estimate, diagonal estimate, and hydrodynamic
limit.  Section~\ref{section 4} constructs the marked martingales and
proves exponential tightness, the upper and local lower bounds, and the
dual representation of the upper rate.

\section{The model and main results}\label{section 2}
This section defines the stochastic cluster coagulation model and states the
main results.  The particle system is a measure-valued Marcus--Lushnikov jump
process with a time-inhomogeneous coagulation kernel.  Its deterministic
counterpart is the multi-type Flory equation, whose gel term records the loss
of finite-cluster mass through a conserved quantity.  We impose regularity,
growth, and compactness assumptions on the kernel and the initial measures.

Theorem~\ref{thm:LLN} proves tightness and identifies all subsequential limits
as solutions of the Flory equation.  Theorem~\ref{thm:LDP upper} gives the
large-deviation upper bound, while Theorem~\ref{thm:LDP lower} gives a local
lower bound near paths that are unique solutions of suitable tilted Flory
equations.  Theorem~\ref{thm:another representation} gives a convex-dual
representation of the upper rate.

\subsection{Dynamics of the cluster coagulation model}
To analyze the cluster coagulation model, we first define the measure-valued Markov process $(\mu_t^N)_{t\in[0,T]}$ via its infinitesimal generator.

\begin{defi}[The time-inhomogeneous cluster coagulation process]\label{def:process}
   Let $E$ be a locally compact Polish space with a continuous mass
   function $m:E\to[0,\infty)$.  Let
   $K:[0,T]\times E^2\to \mathscr{M}_f^+(E)$ be a measurable kernel,
   symmetric in its two input variables, and assume that coagulation
   conserves mass:
   \[
   K\bigl(t,x,y,\{z:m(z)=m(x)+m(y)\}\bigr)=\bar K(t,x,y),
   \qquad \bar K(t,x,y):=K(t,x,y,E).
   \]
   Given an initial configuration of clusters $x_1,\ldots,x_n\in E$
   with positive total mass, we write
   $N=\sum_{i=1}^n m(x_i)>0$ and call $N$ the system size.  The
   corresponding empirical measure is
   $\mu_0^N=N^{-1}\sum_{i=1}^n\delta_{x_i}$.

    The associated time-inhomogeneous cluster coagulation process $(\mu_t^N)_{t\in[0,T]}$ is a $\mathscr{M}_f^+(E)$-valued c\`{a}dl\`{a}g Markov process satisfying:

    \begin{enumerate}[label=(\roman*)]
        \item The state space is 
    \[
    \mathscr{M}_N^+(E) = \left\{ N^{-1} \sum_{i=1}^n \delta_{y_i} \bigg| \sum_{i=1}^n m(y_i) = N, n\in\N, y_i\in E,\forall 1\le  i\le n \right\}.
    \]

        \item Its infinitesimal generator acts on measurable functions  $\psi: \mathscr{M}_f^+(E)\to\R$ as follows: for any state $\mu=N^{-1}\sum_{i=1}^k\delta_{y_i}$,

        \begin{multline*}     
            L_t^{K,N}\psi(\mu) = \sum_{1\leq i<j\leq k} \int_E \{\psi[\mu+N^{-1}(\delta_{z}-\delta_{y_i}-\delta_{y_j})]-\psi[\mu]\}\frac{K(t,y_i,y_j,\diff z)}{N}\\
            = \frac{N}{2}\int_{E\times E\times E} \big(\psi[\mu+N^{-1}(\delta_{z}-\delta_x-\delta_y)]-\psi[\mu]\big) K(t,x,y,\diff z) \mu(\diff x) \left(\mu-\frac{\delta_x}{N}\right)(\diff y).
         \end{multline*}

    \end{enumerate}
\end{defi}

\subsection{Flory equation}

\begin{defi}[Conserved or sub-conserved quantities]\label{def:Conserved or sub-conserved quantities}
A function $\phi:[0,T]\times E\times E \to \R$ is said to be
conservative (for the time-inhomogeneous kernel $K$) if, for all
$r,s\in[0,T]$ and $x,y,q\in E$, for $K(r,x,y,\cdot)$-a.e. $z$,
\[
\phi(s,z,q)=\phi(s,x,q)+\phi(s,y,q).
\]
It is called sub-conservative if
\[
\phi(s,z,q)\leq \phi(s,x,q)+\phi(s,y,q).
\]
It is said to be doubly conservative (respectively, doubly
sub-conservative) if the analogous property also holds when a
coagulation is performed in the third argument; explicitly, one also
requires
\[
\phi(s,q,z)=\phi(s,q,x)+\phi(s,q,y)
\quad\text{(respectively, }\leq\text{)}
\]
for $K(r,x,y,\cdot)$-a.e. $z$. Thus the time parameter
$s$ labels a family of quantities, each of which is conserved along the
whole time-inhomogeneous evolution. Throughout the paper, the doubly
sub-conservative control functions are taken to be independent of $s$.
\end{defi}

\begin{example}
    \[\phi(t,x,y)=m(x)\]
    is a conserved quantity.
\end{example}

We now define solutions to the time-inhomogeneous multi-type Flory equation with coagulation kernel $K$ and the conserved quantity $\phi$.

\begin{defi}[The time-inhomogeneous multi-type Flory equation]\label{def:Flory}
Given a conserved quantity $\phi$, we say a map $t \mapsto \mu_t\in \mathscr M_f^+(E)$, $t\in[0,T]$, is a solution of the multi-type Flory equation with coagulation kernel $K$ and conserved quantity $\phi$ if the following are satisfied:
\begin{enumerate}[label=\arabic*.]
    \item for all Borel sets $A\subseteq E$ the map $t\mapsto\mu_t(A):[0,T]\to [0,\infty)$ is measurable;
    \item for all $g\in \mathscr{C}_c(E)$ and $t\in[0,T]$,
    \[
    \int_0^t\!\int_{E^2\times E}|L[g](x,y,z)|
    K(s,x,y,\diff z)\mu_s(\diff x)\mu_s(\diff y)\diff s<\infty
    \]
    and
    \[
    \int_0^t\!\int_{E^2}|g(y)|\phi(s,x,y)
    (\mu_0+\mu_s)(\diff x)\mu_s(\diff y)\diff s<\infty;
    \]
    \item for all $g\in \mathscr{C}_c(E)$ and $t\in[0,T]$,
    \begin{equation}\label{eq:multi-type Flory equation}
        \begin{aligned}
            \croc{g,\mu_t}=&\croc{g,\mu_0}+\frac{1}{2}\int_0^t \int_{E\times E\times E} L[g](x,y,z)K(s,x,y,\diff z)\mu_s(\diff x)\mu_s(\diff y)\diff s\\
            &-\int_0^t \int_E g(y) \left[\int_E \phi(s,x,y) [\mu_0(\diff x)-\mu_s(\diff x)]\right] \mu_s(\diff y)\diff s,
        \end{aligned}
    \end{equation}
    where $L[g](x,y,z)=g(z)-g(x)-g(y)$;
    \item for each $y\in E$ and $s,t\in[0,T]$ we have
    \[
    \int_E \phi(s,x,y)\mu_t(\diff x)\leq \int_E \phi(s,x,y)\mu_0(\diff x).
    \]
\end{enumerate}
\end{defi}
To simplify the proofs, we introduce the following measure on a
compactified space.
\begin{defi}
Let $X_T=[0,T]\times\bar E^2\times\bar E$.  For
$\pi\in\mathscr D_T(\mathscr M_f^+(E))$, suppose that
\begin{align*}
&\int_0^T\!\int_{E^2}\bar K(t,x,y)
  \pi_t(\diff x)\pi_t(\diff y)\diff t<\infty,\\
&\int_0^T\!\int_E
 \left|\int_E\phi(t,x,y)(\pi_0-\pi_t)(\diff x)\right|
 \pi_t(\diff y)\diff t<\infty.
\end{align*}
Then the following formula defines a finite signed Radon measure
$\Lambda_\pi^{K,T}$ on $X_T$: for every $f\in\mathscr C(X_T)$,
\begin{equation}\label{eq:equivalent to main continuous proposition}
    \begin{aligned}
        \croc{f,\Lambda_\pi^{K,T}}=&\frac{1}{2} \int_0^T\int_{E\times E\times E} f(t,x,y,z) K(t,x,y,\diff z) \pi_t(\diff x) \pi_t(\diff y) \diff t\\
        &+\int_0^T \int_{E} f(t,\infty,y,\infty)\left[\int_E \phi(t,x,y) [\pi_0(\diff x)- \pi_t(\diff x)]\right] \pi_t(\diff y) \diff t.
    \end{aligned}
\end{equation}
We denote by $\mathscr A_\phi$ the set of paths satisfying the two
integrability conditions above for which
\[
G_\pi(\diff t,\diff y):=
\left[\int_E\phi(t,x,y)(\pi_0-\pi_t)(\diff x)\right]
\pi_t(\diff y)\diff t
\]
is a nonnegative measure. Thus, for $\pi\in\mathscr A_\phi$,
$\Lambda_\pi^{K,T}$ is a finite positive measure.
For $t\leq T$, $\Lambda_\pi^{K,t}$ denotes the restriction of this
measure to $[0,t]\times\bar E^2\times\bar E$.
\end{defi}

Every $g\in\mathscr C_c(E)$ extends continuously to $\bar E$ by setting
$\bar g(\infty)=0$.  More generally, we use the same notation whenever
$g\in\mathscr C_b(E)$ has a continuous extension to $\bar E$.  In either
case
\[
L[\bar g](x,y,z)=\bar g(z)-\bar g(x)-\bar g(y)
\]
is continuous and bounded on $\bar E^2\times\bar E$.  With this notation,
the weak form of the multi-type Flory equation is
\begin{equation}\label{lambda-Flory}
    \croc{g,\mu_t}-\croc{g,\mu_0}-\croc{L[\bar{g}],\Lambda_{\mu}^{K,t}}=0,
    \qquad g\in\mathscr C_c(E).
\end{equation}
\subsection{Main results}
In the time-homogeneous case, suitable hypotheses imply concentration of
trajectories; see Assumptions~2.1 and~2.2 in \cite{AIMconvergence}.  The
large-deviation analysis below requires stronger uniform tail and compactness
conditions.

\begin{assumption}[Kernel]\label{assu:kernel}
    We impose the following conditions on the coagulation kernel $K$.
    \begin{enumerate}[label=\arabic*., ref=\theassumption.\arabic*]
        \item {Continuity:} $K(t,x,y,\cdot)$ is a continuous map from $[0,T]\times E^2$ to $\mathscr{M}_f^+(E)$.\label{assu:kernel-1}
        
        \item {Pointwise dominance:} There exists a continuous, doubly
        sub-conservative function $\phi_1$ such that
        $\bar K(t,x,y)\leq\phi_1(x,y)$ for every $t\in[0,T]$ and
        $x,y\in E$.\label{assu:kernel-2}
        
        \item \label{assu:kernel-3} {Proximity:} There exist a continuous conservative function $\phi$ and a continuous doubly sub-conservative function $\phi_2$ such that for an increasing collection of compact sets $(C_k)_{k\in\N}\subset E$ with $\bigcap_{k\in\N}\overline{C_k^c}=\emptyset$, we have for any compact $C'\subset E$
            \begin{equation*}
                \limsup_{k\to\infty} \sup_{t\in[0,T],x\in C_k^c,y\in C'} \frac{ |\bar{K}(t,x,y)-\phi(t,x,y)|}{\phi_2(x,y)}=0.
            \end{equation*} 
        
          \item { Compact support properties:}
          \begin{itemize}
         \item For any compact set $C\subset E$, there exists a compact set $C'\subset E\times E$ such that for any $t\in[0,T]$, if $K(t,x,y,C)>0$, then $(x,y)\in C'$. 
	         \item For every compact set $C^*\subset E\times E$, there
	         exists a single compact set $\widetilde C\subset E$ such that,
	         simultaneously for all $(x,y)\in C^*$ and $t\in[0,T]$,
	         the measure $K(t,x,y,\cdot)$ is supported on $\widetilde C$.
          \end{itemize}
	          Enlarging these compact sets if necessary, we may assume
	          $C'\supseteq C\times C$ and
	          $\widetilde C\times\widetilde C\supseteq C^*$.
          \label{assu:kernel-4}
        
        \item \label{assu:kernel-5} {Tail control:} There exists a continuous doubly sub-conservative function $\phi_3$ such that
        \begin{equation*}
            \limsup_{k\to\infty} \sup_{t\in[0,T],x\in C_k^c,y\in C_k^c} \frac{ \bar{K}(t,x,y)}{\phi_3(x,y)}=0,
        \end{equation*}
        and
        \begin{equation*}
            \limsup_{k\to\infty} \sup_{t\in[0,T],x\in C_k^c,y\in E} \frac{ \phi(t,y,x)}{\phi_3(y,x)}=0.
        \end{equation*}
    \end{enumerate}
     Replacing the control functions by their sum, we may and do use a
     single nonnegative, continuous, doubly sub-conservative function
     $\phi_0$ in Items~\ref{assu:kernel-2}, \ref{assu:kernel-3}, and
     \ref{assu:kernel-5}, with $0\leq\phi\leq\phi_0$.
\end{assumption}

\begin{assumption}[Initial conditions]\label{assu:initial}
    Let $K$ be a coagulation kernel satisfying Assumption~\ref{assu:kernel} for some control functions $\phi$ and $\phi_0$. We assume that the sequence of deterministic initial measures $(\mu_0^N)_{N>0}$ satisfies the following conditions:
    
    \begin{enumerate}[label=\arabic*., ref=\theassumption.\arabic*]
        \item \label{assu:initial-1}{Uniform bound:} The initial measures satisfy a boundedness condition with respect to $\phi_0$:
        \begin{equation*}
            \limsup_{N\to\infty} \int_{E\times E} \phi_0(x,y) \mu_0^N(\diff x)\mu_0^N(\diff y) < \infty.
        \end{equation*}

        \item \label{assu:initial-2}{Convergence:} There exists a  measure $\nu \in \mathscr{M}_f^+(E)$ such that:
        \begin{enumerate}[label=(\alph*)]
            \item $\mu_0^N \to \nu$ weakly in $\mathscr{M}_f^+(E)$ as $N \to \infty$;
            \item The total mass is strictly positive: $\langle m, \nu \rangle > 0$;
            \item For any compact set $C' \subset E$ and any $t \in[0,T]$, the following uniform convergence holds:
            \begin{equation*}
                \lim_{N\to\infty} \sup_{y \in C'} \left| \int_E \phi(t,x,y) \mu_0^N(\diff x) - \int_E \phi(t,x,y) \nu(\diff x) \right| = 0.
            \end{equation*}
        \end{enumerate}

        \item \label{assu:initial-3}{Tightness:} There exists a nonnegative, lower semicontinuous, doubly sub-conservative function $\phi^*$ such that for every $n \in \mathbb{N}$, the set
        \[
        \mathcal{E}_n^* = \left\{ \pi \in \mathscr{M}_f^+(E \times E) : \int_{E \times E} \phi^*(x,y) \pi(\diff x, \diff y) \leq n \right\}
        \]
        is compact in $\mathscr{M}_f^+(E \times E)$. Furthermore, the sequence $(\mu_0^N)$ satisfies:
        \begin{equation*}
            \limsup_{N\to\infty} \int_{E \times E} \phi^*(x,y) \mu_0^N(\diff x)\mu_0^N(\diff y) < \infty.
        \end{equation*}
    \end{enumerate}
\end{assumption}

We first state the subsequential hydrodynamic limit.  Its proof adapts
the compactness argument of \cite{AIMconvergence} and records the
additional uniform-in-time estimates needed here.

\begin{theorem}[Subsequential hydrodynamic limit]\label{thm:LLN}
Let $K$ satisfy Assumption~\ref{assu:kernel} with functions $\phi, \phi_0$, and let $(\mu_0^N)$ satisfy Assumption~\ref{assu:initial}. For each $N$, let $(\mu_t^N)_{t\in[0,T]}$ be the cluster coagulation process associated with $(K,\mu_0^N)$. Then:
\begin{enumerate}
\item The sequence $(\mu_t^N)_{t\in[0,T]}$ is tight in $\mathscr{D}([0,T],\mathscr{M}_f^+(E))$ endowed with the Skorokhod topology induced by the weak topology on $\mathscr{M}_f^+(E)$.
\item For any limit point $(\mu_t)_{t\in[0,T]}$ of a subsequence of $(\mu_t^N)$ as $N \to \infty$, the path $(\mu_t)_{t\in[0,T]}$ solves almost surely the multi-type Flory equation with coagulation kernel $K$, conserved quantity $\phi$, and initial condition $\nu$.
\end{enumerate}
\end{theorem}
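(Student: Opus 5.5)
The plan follows the compactness scheme of \cite{AIMconvergence}: first moment bounds, then tightness via Jakubowski/Aldous criteria, then identification of limits by passing to the limit in the martingale decomposition. The first step is a priori estimates that hold pathwise for the finite system. Since $\phi_0$ and $\phi^*$ are doubly sub-conservative, every coagulation decreases the functionals $\mu\mapsto\langle\phi_0,\mu\otimes\mu\rangle$ and $\langle\phi^*,\mu\otimes\mu\rangle$ (and also $\langle\phi(s,\cdot,y),\mu\rangle$ for each $s,y$). The only correction comes from the diagonal term, which carries an extra $N^{-1}$ factor. Hence, for all $t\in[0,T]$, almost surely, and for $N$ large, $\langle\phi_0,\mu_t^N\otimes\mu_t^N\rangle\le a$ and $\langle\phi^*,\mu_t^N\otimes\mu_t^N\rangle\le a$ for a constant $a$ given by Assumption~\ref{assu:initial}. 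In particular $\mu_t^N$ stays in a set $\mathscr D_T^a$ of paths whose product measures lie in $\mathcal E_a^*$. This yields compact containment: $\{\mu\otimes\mu\}$ lies in a compact set, hence so do the marginals. It also gives the monotonicity property in item~4 of Definition~\ref{def:Flory} at the finite level.

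The second step is tightness in $\mathscr D([0,T],\mathscr M_f^+(E))$. Given compact containment, it suffices by Jakubowski's criterion to show tightness of $\langle g,\mu^N\rangle$ for $g$ in a countable separating family of $\mathscr C_c(E)$. The generator gives
\[
\langle g,\mu^N_t\rangle=\langle g,\mu^N_0\rangle+\int_0^t\frac12\int L[g]\,K(s,x,y,\diff z)\,\mu^N_s(\diff x)\Bigl(\mu^N_s-\tfrac{\delta_x}{N}\Bigr)(\diff y)\diff s+M^{g,N}_t.
\]
For $g$ supported in compact $C$, Assumption~\ref{assu:kernel-4} implies that $L[g]$ is nonzero only if $(x,y)\in C'$ or one of $x,y$ lies in $C$. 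In the first case $\bar K$ is bounded. In the second case the drift is bounded by $\|g\|_\infty\int_C\bar K(s,x,y)\mu(\diff y)\le\|g\|\langle\phi_0,\mu\otimes\mu\rangle\le a\|g\|$. So the drift is uniformly Lipschitz in time. The quadratic variation of $M^{g,N}$ is $O(N^{-1})$ by the same bounds. Aldous' criterion then holds, and by Doob's inequality $M^{g,N}\to0$ uniformly in probability. Since jumps are of size $O(N^{-1})$, the limits are continuous.

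The third step identifies the limit, and this is the main obstacle. Along a subsequence converging (by Skorokhod representation, almost surely) to a continuous $\mu$, I need
\[
\tfrac12\!\int_0^t\!\!\int L[g]\,K\,\mu^N_s\otimes\mu^N_s\,\diff s\;\longrightarrow\;\langle L[\bar g],\Lambda^{K,t}_\mu\rangle .
\]
The diagonal correction vanishes by the $\phi_0$ bound. The difficulty is that weak convergence loses the mass carried to $\infty$, and the gel term must be recovered from the interactions between a compact $y\in C$ and large $x\in C_k^c$. The decomposition is as follows. On $C_k\times C$ the integrand is continuous and compactly supported (Assumption~\ref{assu:kernel-1}), so weak convergence applies. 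On $C_k^c\times C$, I replace $\bar K$ by $\phi$ using the proximity Assumption~\ref{assu:kernel-3}, with error $o_k(1)\cdot a$; here $-g(y)$ is the relevant part of $L[g]$, since $g(z)=0$ unless $z\in C$. Then I write
\[
\int_{C_k^c}\phi(s,x,y)\,\mu^N_s(\diff x)=\int\phi\,\mu_0^N-\int_{C_k}\phi\,\mu^N_s
\]
using the conservation of $\phi$ along the finite dynamics (exactly, up to $O(N^{-1})$). Assumption~\ref{assu:initial-2}(c) handles the first term, and weak convergence on $C_k$ handles the second. Letting $k\to\infty$, the second tail condition in Assumption~\ref{assu:kernel-5} turns $\int_{C_k}\phi\,\mu_s$ into $\int\phi\,\mu_s$, which produces the gel term $-\int g(y)\int\phi(s,x,y)(\nu-\mu_s)(\diff x)\,\mu_s(\diff y)$. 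Pairs in $C_k^c\times C_k^c$ contribute nothing for $g\in\mathscr C_c$ once $k$ is large, by the compact support property. Finally, the integrability conditions in item~2 and the inequality in item~4 pass to the limit by Fatou and lower semicontinuity, using the uniform $\phi_0$ bound and the finite-level monotonicity.
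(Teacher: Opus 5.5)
Your proposal is correct. Steps 1 and 2 match the paper: exact monotonicity of $\langle\Phi,\mu^N_t\otimes\mu^N_t\rangle$ under double sub-conservativity (no diagonal correction is actually needed), compact containment, and an Aldous-type criterion with $O(N^{-1})$ brackets and jumps. Your support-based case split for the drift is unnecessary, since $|L[g]|\le3\|g\|_\infty$ and $\bar K\le\phi_0$ already bound it for every $g\in\mathscr C_b(E)$.

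\textbf{Comparison of the identification step.} The paper first proves Proposition~\ref{prop:main continuous proposition}. This is a stability estimate for $\pi\mapsto\langle f,\Lambda^{K,t}_\pi\rangle$, uniform in $t$, for every $f\in\mathscr C(X_T)$. It then observes that exact conservation kills the gel part of $\Lambda^{K,t}_{\mu^N}$, so the finite drift equals $\langle L[\bar g],\Lambda^{K,t}_{\mu^N}\rangle$ up to the diagonal term. Because such $f$ need not vanish at $\infty$, that proposition has to control two-large interactions and the pieces where $\phi$ has a large argument. This is where both halves of Assumption~\ref{assu:kernel-5} enter, and Lemma~\ref{lemma: cross term to 0} uses the tail bound as well.

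You instead use that, for $g\in\mathscr C_c(E)$, $L[g]$ vanishes on two-large pairs and equals $-g(y)$ on one-large pairs. You then apply proximity and the identity $\int_{C_k^c}\phi\,\mu^N_s=\int\phi\,\mu^N_0-\int_{C_k}\phi\,\mu^N_s$ directly to the drift. This is the same mechanism that sits inside the paper's proposition. However, in your version the limit $k\to\infty$ needs only monotone and dominated convergence, not the tail condition you cite. So your route is leaner and never uses Assumption~\ref{assu:kernel-5}. The paper's route buys uniform-in-time continuity for test functions that do not vanish at $\infty$, which it reuses in Remark~\ref{rmk:extended LLN} and in both large-deviation bounds.

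\textbf{Details to repair.}
\begin{enumerate}
\item The diagonal correction does not vanish ``by the $\phi_0$ bound''. That bound only gives $\frac1N\int_E\phi_0(x,x)\,\mu^N_s(\diff x)\le\langle\phi_0,\mu^N_s\otimes\mu^N_s\rangle\le a$, which is of order one when a single cluster carries mass of order $N$. For $g\in\mathscr C_c(E)$ the correct reason is that $L[g](x,x,\cdot)$ vanishes unless $x$ lies in a fixed compact set. On that set $\bar K$ is bounded, so the term is $O(N^{-1})$.
\item Replace the indicators $\mathbbm{1}_{C_k}$ by continuous compactly supported cutoffs equal to $1$ on $C_k$ before invoking weak convergence, as the paper does with $\psi_\varepsilon,\zeta_\varepsilon$.
\item Take the compact region to be all of $C_k\times C_k$ rather than $C_k\times C$. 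Pairs in $C'$ with $x,y\notin C$ can still produce $z\in C$.
\item Record that $y\mapsto\int_E\phi(s,x,y)\,\nu(\diff x)$ is continuous, being a locally uniform limit of continuous functions by Assumption~\ref{assu:initial-2}(c).
\end{enumerate}
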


\begin{remark}\label{rmk:extended LLN}
    Under our assumptions, the weak equation extends from
    $\mathscr C_c(E)$ to bounded continuous functions that extend
    continuously to $\bar E$.
\end{remark}

Let
\[
\mathscr C_{\bar E}^{1,0}:=
\left\{g\in\mathscr C_b^{1,0}([0,T]\times E):
g\text{ has an extension }\bar g\in
\mathscr C_b^{1,0}([0,T]\times\bar E)\right\}.
\]
For $\pi\in\mathscr A_\phi$, set
\begin{equation}\label{eq:upper-rate-compact}
\begin{aligned}
\mathcal R^K_{\rm upper}(\pi):=
\sup_{g\in\mathscr C_{\bar E}^{1,0}}
\bigg\{&\langle g_T,\pi_T\rangle-\langle g_0,\pi_0\rangle
-\int_0^T\langle\partial_tg_t,\pi_t\rangle\diff t\\
&-\left\langle e^{L[\bar g]}-1,\Lambda_\pi^{K,T}\right\rangle\bigg\}.
\end{aligned}
\end{equation}
We set $\mathcal R^K_{\rm upper}(\pi)=+\infty$ otherwise.  In
particular, the gel contribution in \eqref{eq:upper-rate-compact} is
\[
\int_0^T\int_E(1-e^{-g_t(y)})
\left[\int_E\phi(t,x,y)(\pi_0-\pi_t)(\diff x)\right]
\pi_t(\diff y)\diff t.
\]

\begin{theorem}[Large deviation upper bound]\label{thm:LDP upper}
Under Assumptions \ref{assu:kernel} and \ref{assu:initial}, for any closed set $C \subset \mathscr{D}_T(\mathscr{M}_f^+(E))$,
    \[
    \limsup_{N \to \infty} \frac{1}{N}\log\mathbb{P}(\mu^N \in C)
    \leq  -\inf_{\pi \in C} \left\{\iota_{\nu}(\pi_0) + \mathcal{R}^K_{\rm upper}(\pi) \right\},
    \]
where $\iota_\nu(\pi_0) = 0$ if $\pi_0 = \nu$ and $+\infty$ otherwise.
\end{theorem}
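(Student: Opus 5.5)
The plan is the classical exponential-martingale route, organised in three steps: exponential tightness, a local upper bound at each path, and the passage from compact sets to closed sets.

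\textbf{Step 1 (exponential martingales).} For $g\in\mathscr C_{\bar E}^{1,0}$, apply the generator $L_t^{K,N}$ to $\psi_t(\mu)=\exp(N\langle g_t,\mu\rangle)$. This gives the mean-one martingale
\[
\mathcal M^{g,N}_t=\exp\Big(N\Big[\langle g_t,\mu^N_t\rangle-\langle g_0,\mu^N_0\rangle-\int_0^t\langle\partial_sg_s,\mu^N_s\rangle\diff s-\langle e^{L[\bar g]}-1,\Lambda^{N,t}\rangle\Big]\Big).
\]
Here $\Lambda^{N,t}$ is the finite-particle pair measure
\[
\tfrac12 K(s,x,y,\diff z)\,\mu^N_s(\diff x)\bigl(\mu^N_s-N^{-1}\delta_x\bigr)(\diff y)\,\diff s .
\]
I would obtain this from the compensator of the marked counting measure. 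It is a true martingale because jumps of $\langle g,\mu\rangle$ are $O(1/N)$ and, by the moment bounds, the total rate is at most $N\langle\phi_0,\mu^N\otimes\mu^N\rangle$, which is controlled.

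\textbf{Step 2 (exponential tightness).} I need compact sets $\mathcal K_M$ with $\limsup N^{-1}\log\mathbb P(\mu^N\notin\mathcal K_M)\le -M$. For compact containment, I would use the doubly sub-conservative functions $\phi_0$ and $\phi^*$. The quantity $\langle\phi^*,\mu^N_t\otimes\mu^N_t\rangle$ is nonincreasing along coagulations, up to the diagonal correction, so $\mu^N_t$ stays pathwise in the compact set $\mathcal E_n^*$, with the marginal deduced from the product. The total mass is also a.s. bounded by $\langle m,\mu_0^N\rangle$, which is deterministic. For the Aldous condition at exponential scale, I would use the exponential martingale with $\lambda g$, $g\in\mathscr C_c(E)$, together with the optional stopping theorem. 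This bounds $\mathbb P(|\langle g,\mu^N_{\tau+\delta}-\mu^N_\tau\rangle|>\epsilon)$ by $\exp(-N[\lambda\epsilon-\delta C_g(e^{2\lambda\|g\|}-1)])$, using the uniform bound on the jump rate. Choosing $\lambda$ large and then $\delta$ small gives any exponential decay. I would then invoke the standard criterion (as in Feng--Kurtz or \cite{ldpsun}) to obtain exponential tightness in $\mathscr D_T$. It also follows that the deviation sets lie eventually in the closed moment set $\mathscr D_T^a$ and that limit paths are continuous.

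\textbf{Step 3 (local upper bound).} Fix $\pi$ and a small neighbourhood $O$ intersected with $\mathscr D_T^a$. Writing $\mathbb P(\mu^N\in O)=\mathbb E[\mathcal M^{g,N}_T(\mathcal M^{g,N}_T)^{-1};O]$, I get
\[
\frac1N\log\mathbb P(\mu^N\in O)\le-\inf_{\rho\in O}F_g^N(\rho),
\]
where $F_g^N$ is the bracketed exponent above. The key step, and the one I expect to be the main obstacle, is replacing $\Lambda^{N,T}$ by $\Lambda^{K,T}_\rho$ uniformly on $O$. This needs two ingredients. The first is the diagonal estimate, which removes the $\delta_x/N$ term at cost $o(1)$. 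The second is the continuity of $\rho\mapsto\langle f,\Lambda^{K,t}_\rho\rangle$ on $\mathscr D_T^a$, uniform in $t$, along convergent sequences. The gel term appears in the limit of the ordered finite-particle measure: the mass escaping to $C_k^c$ interacts with a compact cluster $y$ at a rate approximated by $\phi(t,x,y)$ (proximity). Conservation of $\phi$ identifies this escaped amount with $\int\phi(t,x,y)(\pi_0-\pi_t)(\diff x)$, and initial compatibility Assumption~\ref{assu:initial-2}(c) handles the $\pi_0$ part. Two-large interactions vanish by the tail control. I would take these from Section~\ref{section 3}, applied to $f=e^{L[\bar g]}-1\in\mathscr C(X_T)$. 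Uniform continuity also requires the limit to be continuous in time, which holds thanks to Step~2.

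Given that continuity, shrinking $O$ yields $\limsup\le -F_g(\pi)$ for each $g$. Paths with $\pi_0\ne\nu$ are excluded because $\mu^N_0\to\nu$ deterministically, and the constraint $\pi\in\mathscr A_\phi$ holds on the moment set by property 4 of Definition~\ref{def:Flory} passed to limits. Optimising over $g$ gives the local bound $-\iota_\nu(\pi_0)-\mathcal R^K_{\rm upper}(\pi)$. Finally, the minimax/covering lemma for compact sets, combined with exponential tightness, extends the bound from compact sets to closed $C$.
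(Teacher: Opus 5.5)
Your proposal is correct and follows the paper's proof essentially step for step. The shared ingredients are: exponential martingales from the marked compensator; deterministic compact containment in $\mathcal K_a$ together with an exponential Aldous estimate and the Feng--Kurtz criterion; a local bound built from the diagonal estimate, conservation of $\phi$ (which makes the gel term vanish for finite systems) and Proposition~\ref{prop:main continuous proposition} applied along reachable paths converging to a continuous $\pi$; neighbourhoods that are eventually null for the cases $\pi$ discontinuous, $\pi\notin\mathscr D_T^a$, $\pi_0\neq\nu$ and $\pi\notin\mathscr A_\phi$; and finally a finite cover combined with exponential tightness.

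Two harmless slips. First, $\langle\Phi,\mu_t^N\otimes\mu_t^N\rangle$ is nonincreasing exactly, with no diagonal correction. Second, the bound on $\langle 1,\mu_t^N\rangle$ comes from $\Phi\geq 1$ in the moment set, not from $\langle m,\mu_0^N\rangle$, which equals $1$ and does not control the number of clusters.
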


For a nonnegative $\eta\in\mathscr C_b(X_T)$ that is symmetric in
$(x,y)$, write
\begin{align*}
(\hat\eta K)(t,x,y,\diff z)
&:=\eta(t,x,y,z)K(t,x,y,\diff z),\\
\phi_\eta(t,x,y)
&:=\eta(t,\infty,y,\infty)\phi(t,x,y).
\end{align*}
For a path $\pi$ with $\pi_0=\nu$ and $\pi\in\mathscr A_\phi$, let
$\mathcal A^K(\pi)$ be the set of such $\eta$ for which $\pi$ solves
the multi-type Flory equation with kernel $\hat\eta K$ and conserved
quantity $\phi_\eta$; set $\mathcal A^K(\pi)=\emptyset$ otherwise.

\begin{defi}[Sub-Cameron--Martin space]\label{def:sub-CM space}
Define $H^K_0[\nu]$ as the set of paths
$\pi\in\mathscr C_T(\mathscr M_f^+(E))\cap\mathscr A_\phi$ such that
$\pi_0=\nu$ and $\mathcal A^K(\pi)\ne\emptyset$.
\end{defi}

\begin{defi}
Let $K$ satisfy Assumption~\ref{assu:kernel}. For
$\pi\in H_0^K[\nu]$, define the unrestricted candidate cost by
\[
\mathcal R^K_{\rm lower}(\pi):=
\inf_{\eta\in\mathcal A^K(\pi)}
\left\langle\tau^*(\eta-1),\Lambda_\pi^{K,T}\right\rangle.
\]
Here $\tau^*$ is the convex conjugate of $\tau(u)=e^u-u-1$ and is given by
    \[
    \tau^*(u)=\begin{cases}
        \begin{aligned}
            &(u+1)\log(u+1)-u \quad &&\text{if}\; u>-1,\\
            &1 &&\text{if}\; u=-1,\\
            &+\infty &&\text{if}\; u<-1.
        \end{aligned}
    \end{cases}
    \]
\end{defi}

This unrestricted entropy cost is the natural candidate for a matching
lower rate. The proof below, however, applies only to tilts for which the tilted
hydrodynamic equation has a unique solution.
The following stability property ensures that the tilted kernel satisfies the standing assumptions.
\begin{lemma}[Stability under bounded continuous tilts]
Suppose that $K$, $\phi$, and $(\mu_0^N)_{N>0}$ satisfy Assumptions~\ref{assu:kernel} and~\ref{assu:initial}. Let $\eta\in \mathscr{C}_b(X_T)$ be nonnegative and symmetric in $(x,y)$, and satisfy
\[ \inf_{X_T}\eta>0. \]
Then the kernel $\hat{\eta}K$, with conserved quantity $\phi_\eta$, and the initial sequence $(\mu_0^N)_{N>0}$ also satisfy Assumptions~\ref{assu:kernel} and~\ref{assu:initial}, with the same limiting initial measure $\nu$.
\end{lemma}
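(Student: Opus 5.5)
The plan is to check the items of Assumptions~\ref{assu:kernel} and~\ref{assu:initial} one at a time for the pair $(\hat\eta K,\phi_\eta)$. Throughout we use the constants $0<c:=\inf_{X_T}\eta\le\|\eta\|_\infty=:M<\infty$. The key observation is that multiplying by a bounded, strictly positive density changes neither supports nor null sets. In particular, $\hat\eta K(t,x,y,\cdot)$ and $K(t,x,y,\cdot)$ are mutually absolutely continuous. Hence:
\begin{itemize}
\item mass conservation and every ``$K(r,x,y,\cdot)$-a.e.'' statement transfer verbatim to the tilted kernel;
\item $\overline{\hat\eta K}\le M\bar K$;
\item $\hat\eta K(t,x,y,C)>0$ if and only if $K(t,x,y,C)>0$.
\end{itemize}

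\textbf{Easy items.}
\begin{itemize}
\item \emph{Continuity (Item~\ref{assu:kernel-1}).} Take $(t_n,x_n,y_n)\to(t,x,y)$ and $f\in\mathscr C_b(E)$. We need $\int f(z)\eta(t_n,x_n,y_n,z)K(t_n,x_n,y_n,\diff z)\to\int f\eta K$. Write the difference as two pieces:
\begin{itemize}
\item replace $\eta(t_n,x_n,y_n,\cdot)$ by $\eta(t,x,y,\cdot)$, and use weak convergence of $K(t_n,x_n,y_n,\cdot)$;
\item control the replacement error. Eventually $(x_n,y_n)$ lies in a compact $C^*$, so by Item~\ref{assu:kernel-4} all the measures are carried by one compact $\widetilde C$. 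On $[0,T]\times C^*\times\widetilde C$, $\eta$ is uniformly continuous, so the error is at most $o(1)\sup_n\bar K(t_n,x_n,y_n)$, which tends to $0$.
\end{itemize}
\item \emph{Dominance (Item~\ref{assu:kernel-2}).} Use $M\phi_1$.
\item \emph{Compact supports (Item~\ref{assu:kernel-4}).} Unchanged, since supports are unchanged.
\item \emph{Conservativity of $\phi_\eta$.} Since $\eta(s,\infty,q,\infty)$ does not depend on $x$, $y$ or $z$, the relation $\phi_\eta(s,z,q)=\phi_\eta(s,x,q)+\phi_\eta(s,y,q)$ follows by multiplying the identity for $\phi$ by this factor. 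Continuity of $\phi_\eta$ is clear, and $0\le\phi_\eta\le M\phi\le M\phi_0$.
\item \emph{Tail control (Item~\ref{assu:kernel-5}).} Both ratios are at most $M$ times the original ones, so $M\phi_3$ works.
\item \emph{Initial conditions (Assumption~\ref{assu:initial}).} Items~\ref{assu:initial-1} and~\ref{assu:initial-3} are untouched if we take the new control $M\phi_0$, or simply $\phi_0$ scaled. For Item~\ref{assu:initial-2}(c), the factor $\eta(t,\infty,y,\infty)$ comes out of the $x$-integral and is bounded by $M$, which preserves the uniform convergence on $C'$. Weak convergence to $\nu$ and positivity of mass are unchanged.
\end{itemize}

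\textbf{Main obstacle: proximity (Item~\ref{assu:kernel-3}).} We must show
\[
\sup_{t,\;x\in C_k^c,\;y\in C'}\frac{\left|\int\eta(t,x,y,z)K(t,x,y,\diff z)-\eta(t,\infty,y,\infty)\phi(t,x,y)\right|}{\phi_2(x,y)}\to0 .
\]
Split the numerator into three terms.
\begin{enumerate}
\item $\eta(t,\infty,y,\infty)\,|\bar K-\phi|$. This is at most $M|\bar K-\phi|$, which is handled by the original proximity assumption.
\item $\int|\eta(t,x,y,z)-\eta(t,\infty,y,\infty)|\,K(t,x,y,\diff z)$. To control it, use that $[0,T]\times\bar E^2\times\bar E$ is compact, so $\eta$ is uniformly continuous there. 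Given $\varepsilon>0$, there are neighbourhoods of $\infty$ of the form $\overline{C_j^c}\cup\{\infty\}$, for large $j$, on which $|\eta(t,x,y,z)-\eta(t,\infty,y,\infty)|<\varepsilon$ whenever $x,z$ lie in them and $y\in C'$. This uses $\bigcap_k\overline{C_k^c}=\emptyset$, so that the $C_k^c$ shrink to neighbourhoods of $\infty$ in $\bar E$.
\item The contribution of $z\notin C_j^c$, i.e.\ $z$ in a compact set. By the first bullet of Item~\ref{assu:kernel-4}, $K(t,x,y,C_j)>0$ forces $(x,y)$ into a fixed compact set, which is excluded once $x\in C_k^c$ with $k$ large. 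So this contribution vanishes.
\end{enumerate}
It then remains to bound the ratio $\bar K/\phi_2$ uniformly on $C_k^c\times C'$. This ratio is at most $\phi/\phi_2$ plus a vanishing term. Since $\phi\le\phi_0$, we may replace $\phi_2$ by $\phi_0$ in the new assumption, giving the bound $\varepsilon(1+o(1))$. Letting $\varepsilon\to0$ completes the proof.
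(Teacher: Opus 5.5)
Your proposal is correct and follows essentially the same route as the paper. Both check the assumptions item by item with $c\le\eta\le M$, and both use the same two-term splitting in the continuity and proximity steps: uniform continuity of $\eta$ on the compact $X_T$ near $(t,\infty,y,\infty)$, Item~\ref{assu:kernel-4} to force $z$ near $\infty$ for $K(t,x,y,\cdot)$-a.e.\ $z$, and the single control $\phi_0$ to absorb the $\varepsilon$. You are more explicit than the paper on conservativity of $\phi_\eta$, on transferring a.e.\ statements via mutual absolute continuity, and on the role of $\bigcap_k\overline{C_k^c}=\emptyset$. Your localization via Item~\ref{assu:kernel-4} in the continuity step is harmless but unnecessary, since $\eta$ is already uniformly continuous on all of $X_T$.
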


\begin{proof}
    Let
    \[
    0<c\leq \eta \leq C<\infty.
    \]
    The kernel $\hat{\eta}K$ with conserved quantity $\phi_\eta$ satisfies Assumption~\ref{assu:kernel-2}, \ref{assu:kernel-4}, and~\ref{assu:kernel-5} with control function $C\phi_0$. For the continuity condition, let
    \[
    (t_n,x_n,y_n)\longrightarrow(t,x,y)
    \]
    in $[0,T]\times E^2$. By Assumption~\ref{assu:kernel-1},
    \[
    K(t_n,x_n,y_n,\cdot)\Rightarrow K(t,x,y,\cdot).
    \]
    For any $h\in \mathscr C_b(E)$, we write
    \[
    \begin{aligned}
    &\int_E h(z)\eta(t_n,x_n,y_n,z)K(t_n,x_n,y_n,\diff z) - \int_E h(z)\eta(t,x,y,z)K(t,x,y,\diff z)\\
    =&\int_E h(z) \big[\eta(t_n,x_n,y_n,z)-\eta(t,x,y,z)\big]K(t_n,x_n,y_n,\diff z)\\
    &+ \int_E h(z)\eta(t,x,y,z) \big[K(t_n,x_n,y_n,\diff z)-K(t,x,y,\diff z)\big].
    \end{aligned}
    \]
    Since $X_T$ is compact and $\eta\in \mathscr C(X_T)$, uniform continuity gives
    \[
    \sup_{z\in\bar E}\big|\eta(t_n,x_n,y_n,z)-\eta(t,x,y,z)\big|\to 0.
    \]
    Moreover,
    \[
    \bar K(t_n,x_n,y_n)
    \to
    \bar K(t,x,y),
    \]
    so the total masses of $K(t_n,x_n,y_n,\cdot)$ are uniformly bounded. Hence the first term converges to zero. The second term converges to zero because
    \[
    z\longmapsto h(z)\eta(t,x,y,z)
    \]
    belongs to $\mathscr C_b(E)$. Therefore
    \[
    (\hat\eta K)(t_n,x_n,y_n,\cdot)
    \Rightarrow
    (\hat\eta K)(t,x,y,\cdot),
    \]
    which proves Assumption~\ref{assu:kernel-1} for the tilted kernel.

    For the proximity condition, note that
    \[
    \begin{aligned}
    &\left|\overline{\hat\eta K}(t,x,y)-\phi_\eta(t,x,y)\right|\\
    \leq& \int_E \left|\eta(t,x,y,z)-\eta(t,\infty,y,\infty)\right|K(t,x,y,\diff z)\\
    &+\eta(t,\infty,y,\infty)\big|\overline K(t,x,y)-\phi(t,x,y)\big|.
    \end{aligned}
    \]
    By the uniform continuity of $\eta$ on $X_T$ and Assumption~\ref{assu:kernel-4}, for every compact set $C'\subset E$ and every $\varepsilon>0$, there exists $k_0$ such that, for all $k\geq k_0$, $t\in[0,T]$, $x\in C_k^c$, and $y\in C'$,
    \[
    \big|\eta(t,x,y,z)-\eta(t,\infty,y,\infty)\big|<\varepsilon
    \]
    for $K(t,x,y,\diff z)$-almost every $z$. Together with the boundedness of $\eta$ and Assumption~\ref{assu:kernel-3} for the original kernel $K$, this proves Assumption~\ref{assu:kernel-3} for the tilted kernel.
    
    It remains to check the initial condition. Assumption~\ref{assu:initial-1}
    holds with control function $C\phi_0$;
    Assumption~\ref{assu:initial-2}(c) follows by multiplying its integrand by the bounded continuous function $y\mapsto\eta(t,\infty,y,\infty)$; and Assumption~\ref{assu:initial-2}(a)(b) and~\ref{assu:initial-3} is unchanged. Thus Assumption~\ref{assu:initial} also holds for the tilted kernel.
\end{proof}

We now encode this
restriction in the rate appearing in the theorem.

For $\pi\in H_0^K[\nu]$, let $\mathcal A^K_{\rm uniq}(\pi)$ be the
set of $\eta\in\mathcal A^K(\pi)$ such that
\begin{enumerate}[label=(\roman*)]
\item $\inf_{X_T}\eta>0$;
\item the Flory equation associated with $(\hat\eta K,\phi_\eta,\nu)$
has $\pi$ as its unique solution on $[0,T]$.
\end{enumerate}
Set
\[
\mathcal R^K_{\rm lower,uniq}(\pi):=
\inf_{\eta\in\mathcal A^K_{\rm uniq}(\pi)}
\left\langle\tau^*(\eta-1),\Lambda_\pi^{K,T}\right\rangle,
\]
with the convention $\inf\emptyset=+\infty$.

\begin{theorem}[Local large deviation lower bound]\label{thm:LDP lower}
Under Assumptions \ref{assu:kernel} and \ref{assu:initial}, for any open set $O \subset \mathscr{D}_T(\mathscr{M}_f^+(E))$,
\[
\liminf_{N \to \infty} N^{-1}\log\mathbb{P}(\mu^N \in O)
\ge -\inf_{\pi \in O\cap H_{0}^K[\nu]}
\mathcal R^K_{\rm lower,uniq}(\pi).
\]
Equivalently, if $\pi\in H_0^K[\nu]$ and
$\eta\in\mathcal A^K_{\rm uniq}(\pi)$, then every open neighborhood
$O$ of $\pi$ satisfies
\[
\liminf_{N\to\infty}N^{-1}\log\mathbb P(\mu^N\in O)
\ge-\left\langle\tau^*(\eta-1),\Lambda_\pi^{K,T}\right\rangle.
\]
\end{theorem}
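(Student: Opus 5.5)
Fix $\pi\in H_0^K[\nu]$, $\eta\in\mathcal A^K_{\rm uniq}(\pi)$ and an open neighborhood $O$ of $\pi$; the first form of the theorem then follows by taking the infimum over such pairs. The plan is a change of measure. Let $\mathbb P^{\eta}_N$ be the law of the cluster coagulation process driven by the tilted kernel $\hat\eta K$ from the same initial data $\mu_0^N$. By the stability lemma for bounded continuous tilts, $(\hat\eta K,\phi_\eta,\mu_0^N)$ satisfies Assumptions~\ref{assu:kernel} and~\ref{assu:initial}. Theorem~\ref{thm:LLN} therefore shows that $\mu^N$ is tight under $\mathbb P^\eta_N$ and that every limit point solves the Flory equation for $(\hat\eta K,\phi_\eta,\nu)$. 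Since $\pi$ is the unique solution, $\mathbb P^\eta_N(\mu^N\in O)\to1$. (Continuity of $\pi$ ensures that Skorokhod convergence to $\pi$ is the same as uniform convergence.)

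Next I would write the Radon--Nikodym density using the marked counting measure $J^N$ on $[0,T]\times E^2\times E$ that records each event $(t,x,y,z)$. Girsanov's formula for marked point processes gives
\[
\log\frac{\diff\mathbb P^\eta_N}{\diff\mathbb P_N}
= N\Bigl(\langle\log\eta,J^N/N\rangle-\langle\eta-1,\Lambda^{N}\rangle\Bigr),
\]
where $\Lambda^N$ is the finite-system compensator measure $\tfrac12 K\,\mu^N_t(\diff x)(\mu^N_t-\delta_x/N)(\diff y)\diff t$. Two facts make this usable. First, a martingale and exponential estimate under $\mathbb P^\eta_N$ should show that $J^N/N$ is close to $\hat\eta\Lambda^N$ in the relevant test-function sense. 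Second, the diagonal estimate and the continuity of $\pi\mapsto\langle f,\Lambda^{K,t}_\pi\rangle$ on the moment set $\mathscr D_T^a$, both from Section~\ref{section 3}, should show that $\Lambda^N$ is close to $\Lambda^{K,T}_\pi$ on $\{\mu^N\in O_\delta\}$ for small neighborhoods $O_\delta\subset O$ intersected with $\mathscr D^a_T$. The finite-system pair measure carries no explicit gel term. My plan is to show that the contribution of pairs with one cluster in $C_k^c$ converges, via the proximity Assumption~\ref{assu:kernel-3} and conservation of $\phi$, to the gel part of $\Lambda_\pi^{K,T}$ tested against $\eta(t,\infty,y,\infty)$ and $\log\eta(t,\infty,y,\infty)$. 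This is the step I expect to be the main obstacle. It requires exactly the two-large/one-large splitting used in the continuity estimate, now applied to the unbounded-ratio functionals $\log\eta$ and $\eta$. Because $\inf\eta>0$, both are bounded and continuous on $X_T$, and I would apply the same argument to them.

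With these in hand, restrict to the event $B_N=\{\mu^N\in O_\delta\}\cap\{|\langle\log\eta,J^N/N\rangle-\langle\eta\log\eta,\Lambda^{K,T}_\pi\rangle|\le\epsilon\}\cap\{|\langle\eta-1,\Lambda^N\rangle-\langle\eta-1,\Lambda^{K,T}_\pi\rangle|\le\epsilon\}$. By the previous step, $\mathbb P^\eta_N(B_N)\to1$. On $B_N$ the log-density is at most $N(\langle\eta\log\eta-\eta+1,\Lambda^{K,T}_\pi\rangle+2\epsilon)=N(\langle\tau^*(\eta-1),\Lambda_\pi^{K,T}\rangle+2\epsilon)$. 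Hence
\[
\mathbb P_N(\mu^N\in O)\ge \mathbb E^\eta_N\Bigl[\tfrac{\diff\mathbb P_N}{\diff\mathbb P^\eta_N}\mathbf 1_{B_N}\Bigr]\ge e^{-N(\langle\tau^*(\eta-1),\Lambda^{K,T}_\pi\rangle+2\epsilon)}\,\mathbb P^\eta_N(B_N).
\]
Taking $\liminf\frac1N\log$ and then letting $\epsilon\to0$ gives the bound. The moment set $\mathscr D_T^a$ has $\mathbb P^\eta_N$-probability tending to one by the moment bounds, which also hold for the tilted kernel because it satisfies the standing assumptions.
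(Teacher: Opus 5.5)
Your proposal is correct and follows the paper's proof essentially step for step. You tilt to $Q_N=\mathbb P^{N,\hat\eta K}$, use Theorem~\ref{thm:LLN} and uniqueness to get concentration near $\pi$, control $Z_N=\int\log\eta\,\diff\widetilde{\mathcal N}^{N,\hat\eta K}$ by its $O(N)$ second moment, and pass from $\kappa^{N,K}[\mu^N]$ to $\Lambda_\pi^{K,T}$ via Lemma~\ref{lemma: cross term to 0}, conservation of $\phi$, and Proposition~\ref{prop:main continuous proposition}. The step you flag as the main obstacle needs no new argument: conservation makes the gel part of $\Lambda_{\mu^N}^{K,T}$ vanish, and that proposition (with Assumption~\ref{assu:initial-2}(c) for the initial data) already produces the gel term of $\Lambda_\pi^{K,T}$ in the limit, applied to the bounded continuous test functions $\eta\log\eta$ and $\eta-1$ (not $\log\eta$).
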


\begin{remark}
Assumptions~\ref{assu:kernel} and \ref{assu:initial} alone do not provide
a general uniqueness theorem for the tilted Flory equation. Known
uniqueness results, such as \cite[Theorem~2.4]{AIMconvergence}, impose a
stronger eventual-conservativity condition. The restriction to
$\mathcal A^K_{\rm uniq}(\pi)$ is therefore substantive; if this set is
empty, the corresponding pointwise lower bound is vacuous.
\end{remark}

For a path for which $\Lambda_\pi^{K,T}$ is finite, define
\[
\gamma_T^K(\pi,g):=
\langle g_T,\pi_T\rangle-\langle g_0,\pi_0\rangle
-\int_0^T\langle\partial_tg_t,\pi_t\rangle\diff t
-\langle L[\bar g],\Lambda_\pi^{K,T}\rangle.
\]

\begin{theorem}[Representation of the upper rate function]\label{thm:another representation}
Let $K$ satisfy Assumption~\ref{assu:kernel}.  Suppose that
$\pi\in\mathscr A_\phi$ and
\[
\sup_{t\in[0,T]}\left\{
\langle1+\phi_0,\pi_t\otimes\pi_t\rangle+
\langle1+\phi_0,\pi_0\otimes\pi_t\rangle\right\}<\infty.
\]
Then $\Lambda:=\Lambda_\pi^{K,T}$ is a finite positive measure.  For a
finite signed Radon measure $\theta$ on $X_T$, write its Lebesgue
decomposition as $\theta=z\Lambda+\theta^s$ and set
\[
\mathcal J_\Lambda(\theta):=
\begin{cases}
\displaystyle\int_{X_T}\tau^*(z)\,\diff\Lambda
      +|\theta^s|(X_T),&\theta^s\leq0,\\[4pt]
+\infty,&\text{otherwise}.
\end{cases}
\]
Define
\[
\mathcal O_{\mathcal M}^K[\pi]:=
\left\{\theta\in\mathscr M_f(X_T):
\gamma_T^K(\pi,g)=\int_{X_T}L[\bar g]\diff\theta
\ \text{for every }g\in\mathscr C_{\bar E}^{1,0}\right\}.
\]
Then
\begin{equation}\label{eq:rate-representation}
\mathcal R^K_{\rm upper}(\pi)=
\inf_{\theta\in\mathcal O_{\mathcal M}^K[\pi]}
\mathcal J_\Lambda(\theta).
\end{equation}
In particular, if the infimum in \eqref{eq:rate-representation} is
unchanged when restricted to measures absolutely continuous with respect
to $\Lambda$ (for example, if it has an absolutely continuous minimizer),
then
\[
\mathcal R^K_{\rm upper}(\pi)=
\inf_{z\in\mathcal O^K[\pi]}\left\langle\tau^*(z),\Lambda\right\rangle.
\]
Here
\[
\mathcal O^K[\pi]:=\left\{z\in L^1(\Lambda):
\gamma_T^K(\pi,g)=
\langle z,L[\bar g]\rangle_{\Lambda}
\ \text{for every }g\in\mathscr C_{\bar E}^{1,0}\right\}.
\]
In particular, $\mathcal R^K_{\rm upper}(\pi)<\infty$ if and only if
$\mathcal O_{\mathcal M}^K[\pi]$ contains a measure $\theta$ with
$\mathcal J_\Lambda(\theta)<\infty$.
\end{theorem}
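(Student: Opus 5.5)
The identity \eqref{eq:rate-representation} is a Fenchel--Rockafellar duality statement on $\mathscr C(X_T)$. The upper rate is a supremum of a linear functional minus a convex integral functional, and that functional is evaluated only on the linear subspace $V:=\{L[\bar g]:g\in\mathscr C_{\bar E}^{1,0}\}$. The plan is to write $\mathcal R^K_{\rm upper}(\pi)$ as the value of a perturbed convex problem on $\mathscr C(X_T)$ and then read off the dual problem over $\mathscr M_f(X_T)$.

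\textbf{Step 1: finiteness of $\Lambda$ and the convex functional.} First, $\Lambda$ is finite. The hypothesis $\sup_t\{\langle1+\phi_0,\pi_t\otimes\pi_t\rangle+\langle1+\phi_0,\pi_0\otimes\pi_t\rangle\}<\infty$, together with $\bar K\le\phi_0$ and $0\le\phi\le\phi_0$, bounds both integrability conditions defining $\Lambda$. Membership $\pi\in\mathscr A_\phi$ gives positivity. Define
\[
F(h):=\langle e^{h}-1-h,\Lambda\rangle=\langle\tau(h),\Lambda\rangle,\qquad h\in\mathscr C(X_T).
\]
This is convex, finite and norm-continuous on $\mathscr C(X_T)$. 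Next observe that
\[
\langle g_T,\pi_T\rangle-\langle g_0,\pi_0\rangle-\int_0^T\langle\partial_tg_t,\pi_t\rangle\diff t-\langle e^{L[\bar g]}-1,\Lambda\rangle=\gamma_T^K(\pi,g)-F(L[\bar g]).
\]
Hence $\mathcal R^K_{\rm upper}(\pi)=\sup_{g}\{\gamma_T^K(\pi,g)-F(L[\bar g])\}$.

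The map $g\mapsto\gamma_T^K(\pi,g)$ is linear. It must factor through $V$; that is, $L[\bar g]=0$ must imply $\gamma_T^K(\pi,g)=0$. If this fails, one can scale $g$ to make the supremum $+\infty$, and then $\mathcal O_{\mathcal M}^K[\pi]=\emptyset$ as well, so both sides of \eqref{eq:rate-representation} are infinite. Assuming the factorization, set $\ell(L[\bar g]):=\gamma_T^K(\pi,g)$, a linear functional on $V$. Then
\[
\mathcal R^K_{\rm upper}(\pi)=\sup_{h\in V}\{\ell(h)-F(h)\}.
\]

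\textbf{Step 2: Hahn--Banach / Fenchel duality.} Put $G(h)=-\ell(h)$ on $V$ and $+\infty$ off $V$. Since $F$ is continuous at $0\in V=\operatorname{dom}G$, the Fenchel--Rockafellar theorem gives
\[
\sup_{h\in V}\{\ell(h)-F(h)\}=\min_{\theta\in\mathscr M_f(X_T)}\{F^*(\theta)+G^*(-\theta)\},
\]
and the minimum on the right is attained. Here $G^*(-\theta)=\sup_{h\in V}\{-\langle h,\theta\rangle+\ell(h)\}$, which equals $0$ exactly when $\langle h,\theta\rangle=\ell(h)$ for all $h\in V$, i.e. when $\theta\in\mathcal O_{\mathcal M}^K[\pi]$, and equals $+\infty$ otherwise. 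No closedness of $V$ is needed for this.

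\textbf{Step 3: computing $F^*$.} This is the main technical step. The claim is $F^*=\mathcal J_\Lambda$, using Rockafellar's theorem on conjugates of integral functionals on $\mathscr C$ of a compact space.

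I would prove the two inequalities directly. For the lower bound on $F^*$:
\begin{itemize}
\item Test $h$ supported near a $\Lambda$-null set carrying $\theta^s$. If $\theta^s$ has a positive part, taking $h$ large there makes $\langle h,\theta\rangle-F(h)$ arbitrarily large, so $F^*(\theta)=+\infty$.
\item If $\theta^s\le0$, take $h\approx-M$ on a neighbourhood of the support of $\theta^s$; this contributes $\langle h,\theta^s\rangle\approx M|\theta^s|(X_T)$. Meanwhile $\tau(-M)\le M+1$ is multiplied by the small $\Lambda$-mass of that neighbourhood. Letting $M\to\infty$ after the neighbourhood shrinks (by outer regularity), the recovered singular contribution is $|\theta^s|(X_T)$.
\end{itemize}
The weight is exactly $1$ because $\tau^*$ has recession slope $1$ in the negative direction: $\tau^*(u)\to1$ as $u\downarrow-1$, and the singular part sees the recession function $\sup_h\{-h\}$ over $h\to-\infty$ with $\tau(h)\sim|h|$. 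On the absolutely continuous part, Lusin approximation of the pointwise maximiser $h=\log(1+z)$ by continuous functions, plus monotone convergence, gives $\int\tau^*(z)\diff\Lambda$. The reverse inequality $F^*\le\mathcal J_\Lambda$ follows from the pointwise Young inequality $hz\le\tau(h)+\tau^*(z)$ together with $h\theta^s\le|h|\,|\theta^s|$. Handling the interplay of the two parts carefully is where I expect the real work.

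\textbf{Step 4: consequences.} Combining Steps 2 and 3 gives \eqref{eq:rate-representation}. The "in particular" statements are immediate:
\begin{itemize}
\item Restricting to $\theta=z\Lambda$ turns $\mathcal O_{\mathcal M}^K[\pi]$ into $\mathcal O^K[\pi]$ and $\mathcal J_\Lambda$ into $\langle\tau^*(z),\Lambda\rangle$.
\item Attainment of the minimum in Step 2 gives: $\mathcal R^K_{\rm upper}(\pi)<\infty$ if and only if some $\theta\in\mathcal O_{\mathcal M}^K[\pi]$ has $\mathcal J_\Lambda(\theta)<\infty$.
\end{itemize}
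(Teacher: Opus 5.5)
Your route is the paper's route: rewrite the rate as $\sup_g\{\gamma_T^K(\pi,g)-F(L[\bar g])\}$ with $F(h)=\langle\tau(h),\Lambda\rangle$, apply Fenchel--Rockafellar, and identify $F^*$ with $\mathcal J_\Lambda$. Steps 1--2 are fine; the factorization issue never arises, since $L[\bar g](t,x,x,x)=-\bar g(t,x)$. The gap is Step 3: $F^*\neq\mathcal J_\Lambda$, and your own test functions show it.

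Let $\theta=z\Lambda-\sigma$ with $\sigma\ge0$, $\sigma\perp\Lambda$, $\sigma\neq0$. Pick:
\begin{itemize}
\item a compact $\Lambda$-null set $K_0$ with $\sigma(K_0)>0$;
\item an open $U\supset K_0$ with $\int_U(1+|z|)\,\diff\Lambda\le\sigma(K_0)/2$;
\item a continuous $h$ with $-M\le h\le0$, $h=-M$ on $K_0$, and $h=0$ off $U$.
\end{itemize}
Since $\tau(h)\le\tau(-M)\le M$ on $U$, you get $\langle h,\theta\rangle-F(h)\ge M\sigma(K_0)/2$, with $U$ independent of $M$. Hence $F^*(\theta)=+\infty$. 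The cost $M\Lambda(U)$ is killed by shrinking $U$, while the gain stays $M\sigma(K_0)$, so nothing stops at $|\theta^s|(X_T)$.

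In Rockafellar's theorem the singular density is weighted by the recession function of $\tau^*$, i.e.\ the support function of $\operatorname{dom}\tau=\R$, which is $+\infty$ off $0$. The value $\tau^*(-1)=1$ is not a recession slope; indeed $\tau^*=+\infty$ on $(-\infty,-1)$. Likewise, your reverse inequality only gives $\int h\,\diff\theta^s\le\|h\|_\infty|\theta^s|(X_T)$, which is unbounded in $h$; capping it at $|\theta^s|(X_T)$ would need $h\ge-1$. The paper's proof has exactly the same defect: its $\tau^\infty$ (equal to $-u$ for $u<0$) is the recession function of $\tau$, not of $\tau^*$.

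The gap cannot be closed, because \eqref{eq:rate-representation} with the $|\theta^s|$ term is false in general. Take $E=[0,\infty)$, $K(t,x,y,\diff z)=\delta_{x+y}(\diff z)$, $\phi=0$ and $\phi_0(x,y)=(1+x)(1+y)$; Assumption~\ref{assu:kernel} holds. Take $\nu(\diff x)=e^{-x}\diff x$ and $\pi_t\equiv\nu$. Then $\gamma_T^K(\pi,g)=-\langle L[\bar g],\Lambda\rangle$. Hence $\mathcal R^K_{\rm upper}(\pi)=\sup_g\langle 1-e^{L[\bar g]},\Lambda\rangle=\Lambda(X_T)=T/2$, approached by $g\equiv c\to\infty$.

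Now set $\sigma:=\tfrac{e^{-1}}{2}\,\diff t\otimes\nu(\diff x)\otimes\nu(\diff y)\otimes(\nu*\nu)(\diff z)$. It has the same $(t,x)$-, $(t,y)$- and $(t,z)$-marginals as $e^{-1}\Lambda$, so $\langle L[\bar g],\sigma\rangle=e^{-1}\langle L[\bar g],\Lambda\rangle$ for every $g$. It is also singular to $\Lambda$, which lives on $\{z=x+y\}$. Thus $\theta:=(e^{-1}-1)\Lambda-\sigma$ lies in $\mathcal O_{\mathcal M}^K[\pi]$, and
\[
\mathcal J_\Lambda(\theta)=(1-2e^{-1})\tfrac T2+e^{-1}\tfrac T2=(1-e^{-1})\tfrac T2<\mathcal R^K_{\rm upper}(\pi).
\]

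With the correct conjugate, $F^*(\theta)=\langle\tau^*(z),\Lambda\rangle$ if $\theta=z\Lambda$ and $+\infty$ otherwise, your Steps 1--2 do prove $\mathcal R^K_{\rm upper}(\pi)=\min_{z\in\mathcal O^K[\pi]}\langle\tau^*(z),\Lambda\rangle$, attained whenever finite. So the absolutely continuous formula holds unconditionally, and the finiteness criterion holds with $\theta\ll\Lambda$. That is what should replace the singular-part statement.
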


\subsection{Application to the Smoluchowski coagulation model}
Our framework includes the classical Smoluchowski coagulation model after
viewing its state space as $E=[0,\infty)$ and extending the kernel at zero
in a harmless way.  Its one-point compactification is then
$\bar E=[0,\infty]$.  We take
\[
K(t,x,y,\diff z)=K(t,x,y)\delta_{x+y}(\diff z).
\]
The assertion below is a specialization of the abstract results, not a
claim that every gelling kernel automatically satisfies all of
Assumption~\ref{assu:kernel}.

Suppose that $K$ is continuous and symmetric, and that its growth and
uniform tail limits verify Assumption~\ref{assu:kernel}; a standard model
case has
\[
K(t,x,y)\asymp xy^\alpha+x^\alpha y,
\qquad \alpha\in[0,1),
\]
as in \cite{Fournier2004,FOURNIER2009}.  When the following limit exists
with the uniformity required in Assumption~\ref{assu:kernel-3}, one may take
\[
\phi(t,x,y)=xl(t,y), \quad\textrm{where }l(t,y)=\lim_{x\to\infty}\frac{K(t,x,y)}{x},
\]
provided $l(t,\cdot)$ has properties needed
for the conserved quantity.  A convenient common control is
$\phi_0(x,y)=C(1+x)(1+y)$.  Deterministic initial conditions must still
satisfy every part of Assumption~\ref{assu:initial}; in particular, weak
convergence alone does not replace its moment and compact-containment
requirements.

\begin{remark}[The multiplicative kernel]\label{rmk:multiplicative-kernel}
Assumption~\ref{assu:kernel-5} excludes the classical multiplicative
kernel
\[
K(x,y)=xy
\]
of the Marcus--Lushnikov process.  The tail-smallness conditions in that
assumption make coagulations between two clusters whose types both escape
to infinity negligible.  This fails for the multiplicative kernel, for
which the proof would otherwise have to record interactions between two
asymptotically macroscopic clusters.

The decomposition used in the proof of
Proposition~\ref{prop:main continuous proposition} suggests an alternative
treatment for kernels of multiplicative order by restricting the test
functions to satisfy
\[
f(t,\infty,\infty,\infty)=0,
\qquad t\in[0,T].
\]
This suppresses the two-large-cluster contribution at the corresponding
boundary point.  It would also require upper-bound tests satisfying
$\bar g(t,\infty)=0$ and admissible lower-bound tilts satisfying
$\eta(t,\infty,\infty,\infty)=1$.  A separate reformulation of the
continuity estimate and the rate functions would be needed, and we do not
claim such an extension here.
\end{remark}

In this classical setting, the multi-type Flory equation reduces to the
usual Flory equation for mass loss due to gelation. For any
$g\in\mathscr C_c([0,\infty))$,
    \begin{equation}\label{eq:classic Flory equation}
        \begin{aligned}
            \langle g,\mu_t\rangle
            =&\langle g,\mu_0\rangle
            +\frac{1}{2}\int_0^t\!\int_{[0,\infty)^2}
            K(s,x,y)[g(x+y)-g(x)-g(y)]
            \mu_s(\diff x)\mu_s(\diff y)\diff s\\
            &-\int_0^t\!\langle g(\cdot)l(s,\cdot),\mu_s\rangle
            \langle x,\mu_0-\mu_s\rangle\diff s.
        \end{aligned}
    \end{equation}
This formulation agrees with the models studied in \cite{Fournier2004} and \cite{FOURNIER2009}, where the correction term involving $l(s,x)$ accounts for interactions between finite clusters and the gel phase.

Our large deviation results yield rate functions specialized to this
setting.  Write
\[
\Gamma_T(\pi,g):=\langle g_T,\pi_T\rangle-\langle g_0,\pi_0\rangle
-\int_0^T\langle\partial_sg_s,\pi_s\rangle\diff s.
\]
The upper rate becomes
\begin{multline*}
\mathcal{R}^K_{\rm upper}(\pi)
=\sup_{g \in \mathscr C_{\bar E}^{1,0}}\bigg\{
\Gamma_T(\pi,g)\\
-\frac{1}{2}\int^T_0\!\int_{[0,\infty)^2}
\left(e^{g_s(x+y)-g_s(x)-g_s(y)}-1\right)K(s,x,y)
\pi_s(\diff x)\pi_s(\diff y)\diff s\\
+\int_0^T\!\int_{[0,\infty)}(1-e^{-g_s(x)})l(s,x)
\langle x,\pi_0-\pi_s\rangle\pi_s(\diff x)\diff s\bigg\}.
\end{multline*}
For uniqueness-admissible tilts, the cost appearing in the local lower
bound takes the following form, where
$h_\eta:=\eta\log\eta-\eta+1$:
    \[
    \begin{aligned}
        \mathcal{R}^K_{\rm{lower,uniq}}(\pi):=
        \inf_{\eta}\bigg\{&\frac{1}{2} \int_0^T\int_{[0,\infty)^2}
        h_\eta(s,x,y)K(s,x,y)\\
        &\hspace{38mm}\times\pi_s(\diff x)\pi_s(\diff y)\diff s\\
        &+\int_0^T
        \langle h_\eta(s,\infty,\cdot)l(s,\cdot),\pi_s\rangle\\
        &\hspace{38mm}\times\langle x,\pi_0-\pi_s\rangle\diff s\bigg\}.
    \end{aligned}
    \]
Here the infimum is restricted to strictly positive symmetric tilts
$\eta \in \mathscr{C}_b([0,T] \times [0,\infty]^2)$ for which
$\pi$ is the unique solution of the weak form of
\eqref{eq:classic Flory equation} with perturbed kernel $\hat{\eta}K$,
starting from $\nu$.

Thus the upper bound applies globally, while the stated lower bound applies
locally around paths satisfying this tilted uniqueness condition.


\section{Hydrodynamic limit}\label{section 3}
This section proves tightness of the time-inhomogeneous coagulation processes
and identifies their subsequential limits.  We first introduce a closed set
$\mathscr{D}_T^a$ encoding uniform-in-time $\Phi$-moment bounds. We then
establish a stability estimate for the map
$\pi\longmapsto\left\langle f,\Lambda_\pi^{K,t}\right\rangle$ along uniformly
weakly convergent sequences satisfying the required initial-data
compatibility condition.
These estimates identify every limit point with a solution of the multi-type
Flory equation and will also be used in Section~\ref{section 4}.

\subsection{Technical lemmas}

Set $\Phi:=1+\phi_0+\phi^*$.  By
Assumptions~\ref{assu:initial-1} and \ref{assu:initial-3}, and by the
convergence $\mu_0^N\to\nu$, there exist constants $a>0$ and $N_0$
such that
\begin{equation}\label{eq:a}
\sup_{N>N_0}\int_{E\times E}\Phi(x,y)\mu_0^N(\diff x)\mu_0^N(\diff y)\leq a.
\end{equation}
For the constant $a$ determined above, we define the subset $\mathscr{D}_T^a\subset\mathscr{D}_T(\mathscr{M}_f^+(E))$ as:
\[
\begin{aligned}
    \mathscr{D}_T^a = \Bigg\{\mu\in \mathscr{D}_T(\mathscr{M}_f^+(E)) \bigg|& \sup_{t\in[0,T]}\croc{\Phi,\mu_t\otimes\mu_t}\leq a,\\
    &\sup_{t\in[0,T]}\croc{\Phi,\mu_0\otimes\mu_t}\leq a\Bigg\}.
\end{aligned}
\]
Recall that $\Phi$ is a nonnegative doubly sub-conserved quantity
(Definition~\ref{def:Conserved or sub-conserved quantities}).

\begin{lemma}\label{lemmadta}
    Under Assumption~\ref{assu:kernel} and  Assumption~\ref{assu:initial}, for any $N>N_0$, $\mu^N\in \mathscr{D}_T^a$ almost surely.
    Moreover, $\mathscr D_T^a$ is closed in
    $\mathscr{D}_T(\mathscr M_f^+(E))$ equipped with the $J_1$ topology. In
    particular, if $\mu$ is a limit of any convergent subsequence of $\mu^N$
    in $\mathscr{D}_T(\mathscr{M}_f^+(E))$, then
    $\mu\in\mathscr{D}_T^a$.
\end{lemma}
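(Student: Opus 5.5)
The argument has three parts: almost-sure membership of $\mu^N$ in $\mathscr D_T^a$, closedness of $\mathscr D_T^a$, and the consequence for limit points.

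\emph{Almost-sure membership.} The key observation is that both functionals
\[
t\mapsto\croc{\Phi,\mu_t^N\otimes\mu_t^N}, \qquad t\mapsto\croc{\Phi,\mu_0^N\otimes\mu_t^N}
\]
are pathwise nonincreasing. The process is a pure-jump process. At each jump, clusters $y_i,y_j$ are replaced by $z$, with $z$ drawn from $K(t,y_i,y_j,\cdot)$, so off a null set we may assume $z$ lies in the full-measure set where the sub-conservation inequalities hold. Since $\Phi=1+\phi_0+\phi^*$ is nonnegative and doubly sub-conservative (the constant $1$ is not sub-conservative in the strict sense, so the constant term is handled separately below), we use the double sub-conservativity. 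Write the new configuration and expand
\[
\croc{\Phi,\mu\otimes\mu}=N^{-2}\sum_{k,l}\Phi(y_k,y_l).
\]
Terms involving $z$ and another cluster $q$ are bounded by the corresponding sums with $y_i,y_j$, using sub-conservativity in the first argument and in the third (second) argument. The term $\Phi(z,z)$ is bounded by applying sub-conservativity twice:
\[
\Phi(z,z)\le \Phi(y_i,y_i)+\Phi(y_i,y_j)+\Phi(y_j,y_i)+\Phi(y_j,y_j).
\]
This loses only nonnegative cross terms. For the constant $1$, the count $n_t$ decreases, so $\croc{1,\mu_t\otimes\mu_t}=n_t^2/N^2$ decreases. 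The cross functional $\croc{\Phi,\mu_0\otimes\mu_t}$ is handled identically, using sub-conservativity in the second slot only. A.s. the jump times are finite in number: the rates are finite because $\bar K\le\phi_0$ and $n$ is finite, and $n_t$ can decrease at most $n_0$ times. Hence both suprema over $t$ equal the values at $t=0$, which are bounded by $\croc{\Phi,\mu_0^N\otimes\mu_0^N}\le a$ by \eqref{eq:a} for $N>N_0$. (The cross term at time $0$ is the same quantity.)

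\emph{Closedness.} Let $\mu^n\to\mu$ in $J_1$. Then there are time changes $\lambda_n\to\mathrm{id}$ uniformly with $\mu^n_{\lambda_n(t)}\to\mu_t$ weakly for every $t$; I will use the standard fact that this holds at all $t$, not only at continuity points. Also $\mu^n_0\to\mu_0$, since $\lambda_n(0)=0$. Weak convergence of finite measures gives $\mu^n_{\lambda_n(t)}\otimes\mu^n_{\lambda_n(t)}\to\mu_t\otimes\mu_t$ and $\mu^n_0\otimes\mu^n_{\lambda_n(t)}\to\mu_0\otimes\mu_t$ weakly on $E\times E$. Since $\Phi$ is nonnegative and lower semicontinuous ($\phi_0$ continuous, $\phi^*$ lsc), the portmanteau inequality for lsc functions bounded below (obtained by approximating $\Phi$ from below by bounded continuous functions) yields
\[
\croc{\Phi,\mu_t\otimes\mu_t}\le\liminf_n\croc{\Phi,\mu^n_{\lambda_n(t)}\otimes\mu^n_{\lambda_n(t)}}\le a,
\]
and likewise for the cross term. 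Therefore $\mu\in\mathscr D_T^a$.

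\emph{Limit points.} If a subsequence converges in law, use Skorokhod representation (or the portmanteau theorem for the closed set): $\mathbb P(\mu\in\mathscr D_T^a)\ge\limsup\mathbb P(\mu^N\in\mathscr D_T^a)=1$.

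The main obstacle is the pathwise monotonicity: one must check carefully that double sub-conservativity controls every term of the pair sum, including the self-pair $\Phi(z,z)$ and the diagonal. The a.e.-$z$ qualification is handled by noting that the jump target is sampled from $K$, so the exceptional set has probability zero.
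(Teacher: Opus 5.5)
Your proposal is correct and follows essentially the same route as the paper. Both arguments get pathwise monotonicity of the two $\Phi$-functionals from double sub-conservativity, then prove closedness via $J_1$ time changes $t_i\to t$ with $\sigma^i_{t_i}\to\sigma_t$ and lower semicontinuity of $\Phi$. The paper phrases the monotonicity slot by slot, $\int_E\Phi(x,q)\mu_s^N(\diff x)\le\int_E\Phi(x,q)\mu_r^N(\diff x)$ for $r\le s$, and then applies Fubini; this is equivalent to your jump-by-jump expansion including the $\Phi(z,z)$ term.

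Two minor remarks. First, the constant $1$ is in fact doubly sub-conservative ($1\le1+1$), so treating it separately is unnecessary, though harmless. Second, for the self-pair term, the exceptional $z$-set in the definition depends on $q$, so setting $q=z$ needs one more observation. By lower semicontinuity, each exceptional set $\{z:\Phi(z,q)>\Phi(y_i,q)+\Phi(y_j,q)\}$ is open and $K$-null, hence disjoint from $\operatorname{supp}K(t,y_i,y_j,\cdot)$; the paper also leaves this point implicit.
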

\begin{proof}\label{lemma:mu in DTa}

For any doubly sub-conserved quantity $g$, we have for any $0\leq r\leq s$,
    \[
    \int_{E} g(x,y) \mu_s^N(\diff x)\leq \int_{E} g(x,y) \mu_r^N(\diff x).
    \]
    Since $\Phi$ is doubly sub-conserved, it follows from the equation above and Fubini's theorem that, 
\[
\sup_{s\in[0,T]}\int_{E\times E}\Phi(x,y)\mu_s^N(\diff x)\mu_s^N(\diff y)\le
\int_{E\times E}\Phi(x,y)\mu_0^N(\diff x)\mu_0^N(\diff y),
\]
and
\[
\sup_{s\in[0,T]}\int_{E\times E}\Phi(x,y)\mu_0^N(\diff x)\mu_s^N(\diff y)\le
\int_{E\times E}\Phi(x,y)\mu_0^N(\diff x)\mu_0^N(\diff y).
\]
Given the upper bound~\eqref{eq:a},  for any $N>N_0$, the process $\mu^N$ belongs to the set $\mathscr{D}_T^a$ almost surely.

For any convergent sequence $(\sigma^i)\subset \mathscr{D}_T^a$ with
limit $\sigma$ and any $t\in[0,T]$, the time changes in the definition of
the $J_1$ topology provide $\sigma_{0}^i\to\sigma_0$ weakly and times $t_i\to t$ such that
$\sigma_{t_i}^i\to\sigma_t$ weakly. Thus,
    \[
    \langle \Phi,\sigma_t\otimes \sigma_t \rangle\leq \liminf_i\langle \Phi,\sigma_{t_i}^i\otimes \sigma_{t_i}^i \rangle\leq a.
    \]
    and
    \[
    \langle \Phi,\sigma_0\otimes \sigma_t \rangle\leq \liminf_i\langle \Phi,\sigma_0^i\otimes \sigma_{t_i}^i \rangle\leq a.
    \]
\end{proof}

\begin{prop}\label{prop:main continuous proposition}
    Suppose the coagulation kernel $K$ satisfies Assumption~\ref{assu:kernel} for some functions $\phi,\phi_0$. Let $f(t,x,y,z)\in \mathscr{C}_b(X_T)$ be fixed. Consider an element $\sigma\in \mathscr{D}_T^a$ with an initial measure $\sigma_0$ such that, for any fixed $t\in[0,T]$,
     the mapping $  y\mapsto\int_E\phi(t,x,y)\sigma_0(\diff x)$ is continuous on $E$. Then, the mapping
    \[
    \pi\in \mathscr{D}_T^a \mapsto
    \left(\croc{f,\Lambda_\pi^{K,t}}\right)_{t\in[0,T]}
    \in\mathscr{C}([0,T],\R)
    \]
    is continuous at $\sigma$ in the following sense: 
    for any sequence $(\sigma_\cdot^i)_i$ in $\mathscr{D}_T^a$ such that, for every $h\in \mathscr{C}_b(E)$, 
    \begin{equation}\label{eq:convergence in uniform topology in main prop}
        \lim_{i\to\infty}\sup_{t\in[0,T]}|\croc{h,\sigma_t^i}-\croc{h,\sigma_t}|=0,
    \end{equation}
    and for every compact set $C'\subset E$ and $t\in[0,T]$,
    \[
    \lim_{i\to\infty}\sup_{y\in C'}\left|\int_E\phi(t,x,y)\sigma_0^i(\diff x)-\int_E\phi(t,x,y)\sigma_0(\diff x)\right|=0,
    \]
   it follows that
    \[
    \lim_{i\to\infty} \sup_{t\in[0,T]}\left|\croc{f,\Lambda_{\sigma^i}^{K,t}}-\croc{f,\Lambda_\sigma^{K,t}}\right|=0.
    \]
    
\end{prop}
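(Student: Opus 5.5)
We split $\langle f,\Lambda_\pi^{K,t}\rangle$ into the coagulation part $A_\pi(t)$ and the gel part $B_\pi(t)$, and prove uniform-in-$t$ convergence of each separately. For both parts, uniformity in $t$ comes from the moment bounds defining $\mathscr D_T^a$ together with equicontinuity in $t$. Indeed, since $\bar K\le\phi_0\le\Phi$ and $|\phi|\le\phi_0$, each integrand over $[t,t']$ is bounded by $\|f\|_\infty a|t'-t|$ for the first part and by $2\|f\|_\infty a|t'-t|$ for the gel part. The maps $t\mapsto\langle f,\Lambda^{K,t}_{\sigma^i}\rangle$ are therefore uniformly Lipschitz, and it suffices to prove convergence of the time integrals with the time variable integrated out. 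By dominated convergence in $s$, with dominating constant $\|f\|_\infty\cdot 2a$, this reduces to pointwise-in-$s$ convergence of the spatial integrands.

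\textbf{Coagulation part.} Fix $s$ and $k$, and choose a cutoff $\chi_k\in\mathscr C_c(E)$ with $0\le\chi_k\le1$ and $\chi_k=1$ on $C_k$. Split $\pi_s\otimes\pi_s$ into three pieces: both points in the compact region, exactly one large point, and both large points.

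For the compact-compact piece, the integrand $(x,y)\mapsto\chi_k(x)\chi_k(y)\int f(s,x,y,z)K(s,x,y,\diff z)$ is bounded and continuous. Continuity uses Assumption~\ref{assu:kernel-1}, together with Assumption~\ref{assu:kernel-4} to place the $z$-support in a fixed compact set $\widetilde C$, where $f$ is uniformly continuous. Weak convergence of $\sigma^i_s\otimes\sigma^i_s$, which follows from \eqref{eq:convergence in uniform topology in main prop}, then gives convergence of this piece.

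For the one-large/one-compact piece ($x\in C_k^c$, $y$ in a compact $C'$), Assumption~\ref{assu:kernel-3} allows replacing $\bar K$ by $\phi$ up to $o_k(1)\phi_0$. Uniform continuity of $f$ on $X_T$ then allows replacing $f(s,x,y,z)$ by $f(s,\infty,y,\infty)$ up to $o_k(1)$. This step needs $z\to\infty$ whenever $x\to\infty$; that holds because mass is conserved and $z$ must leave every compact set, by the first item of Assumption~\ref{assu:kernel-4} applied contrapositively. The resulting term is $\int f(s,\infty,y,\infty)\phi(s,x,y)(1-\chi_k(x))\pi_s(\diff x)\pi_s(\diff y)$.

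For the two-large piece, Assumption~\ref{assu:kernel-5} bounds the contribution by $o_k(1)\langle\phi_0,\pi_s\otimes\pi_s\rangle\le o_k(1)a$, uniformly in $i$.

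\textbf{Gel part.} We handle $\int f(s,\infty,y,\infty)\int\phi(s,x,y)\pi_0(\diff x)\pi_s(\diff y)$ and $\int f(s,\infty,y,\infty)\int\phi(s,x,y)\pi_s(\diff x)\pi_s(\diff y)$ separately.

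For the $\pi_0$ term, restrict $y$ to the compact set $C_k$. There the function $y\mapsto\int\phi(s,x,y)\sigma_0^i(\diff x)$ converges uniformly, by hypothesis, to a limit that is continuous, also by hypothesis. Combined with $\sigma^i_s\to\sigma_s$ weakly, this gives convergence. The contribution from $y\in C_k^c$ is $o_k(1)a$ by the second condition in Assumption~\ref{assu:kernel-5}, since $\phi(s,x,y)\le o_k(1)\phi_0(x,y)$ for $y\in C_k^c$ and $\langle\phi_0,\pi_0\otimes\pi_s\rangle\le a$. Note that this uses the symmetric form of the condition; if only one argument is covered, we would instead exploit the roles of $x$ and $y$ as in that assumption.

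For the $\pi_s\otimes\pi_s$ gel term, this is exactly where the one-large piece of the coagulation part pays off. The term splits into a region with $x$ compact (a continuous bounded integrand after cutoff, so weak convergence applies) and a region with $x$ large. In the coagulation part we already obtained $\int f(s,\infty,y,\infty)\phi(s,x,y)(1-\chi_k(x))\pi_s\pi_s$, which is not continuous in $\pi$ by itself. It enters $\Lambda$ together with the corresponding piece of $-\int f\phi\,\pi_s\pi_s$, and these two cancel exactly.

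\textbf{Main obstacle.} The crux is this cancellation of the unbounded $x\to\infty$ contributions between the coagulation term and the $\pi_s\otimes\pi_s$ gel term. Neither piece is weakly continuous on its own, but their difference reduces to compactly cut-off integrands plus errors of size $o_k(1)a$ that are uniform in $i$.

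Letting $i\to\infty$ and then $k\to\infty$ concludes the proof. Two points need care. First, the weak continuity of $(x,y)\mapsto\chi_k(x)\chi_k(y)\phi(s,x,y)$ is fine because $\phi$ is continuous. Second, the bounds must hold uniformly over $i$; this comes from $\sigma^i\in\mathscr D_T^a$ and the closedness of $\mathscr D_T^a$ from Lemma~\ref{lemmadta}, which also puts $\sigma$ itself in $\mathscr D_T^a$.
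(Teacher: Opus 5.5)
Your decomposition is essentially the paper's: a compact part $I_{1,t}$; a one-large/one-compact part $I_{2,t}$, where coagulation is traded against the $\pi_s\otimes\pi_s$ gel term via proximity; a two-large part $I_{3,t}$; and the $\pi_0$ gel term split into $I_{4,t}$ and $I_{5,t}$. The gap is in the step you call the crux. Because of the factor $\tfrac12$ in the coagulation term, the region $\{x\ \text{large},\ y\in C'\}$ yields only $\tfrac12\int f(s,\infty,y,\infty)\phi(s,x,y)\,\pi_s(\diff x)\pi_s(\diff y)$. This does not cancel the gel term $-\int f(s,\infty,y,\infty)\phi(s,x,y)\,\pi_s(\diff x)\pi_s(\diff y)$ on that region. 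The other half has to come from the mirror region $\{x\in C',\ y\ \text{large}\}$, where the coagulation integrand is close to $f(s,x,\infty,\infty)\phi(s,y,x)$. After relabelling $x\leftrightarrow y$, this supplies the missing half only if $f(s,y,\infty,\infty)=f(s,\infty,y,\infty)$.

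The paper makes this pairing by symmetrizing the loss term in $I_{2,t}$ (the functions $f^{\phi,1}$, $f^{\phi,2}$). Its identity $\sum_pI_{p,t}=\langle f,\Lambda_\pi^{K,t}\rangle$ needs exactly this boundary symmetry, despite its remark that no symmetry of $f$ is required. The hypothesis cannot be dropped. In the setting of Theorem~\ref{thm:LLN}, $\Lambda_{\mu^N}^{K,t}$ has zero gel part and a coagulation part symmetric in $(x,y)$. So for $a\in\mathscr C_c(E)$ the test functions $a(x)$ and $a(y)$ give it the same value. Their values on $\Lambda_\mu^{K,t}$, however, differ by $\int_{[0,t]\times E}a\,\diff G_\mu$, which is nonzero for suitable $a\geq0$ once gel has formed; so the conclusion fails for at least one of them. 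Every $f$ to which the proposition is actually applied ($L[\bar g]$, $e^{L[\bar g]}-1$, $\tau^*(\eta-1)$ with $\eta$ symmetric) is symmetric in $(x,y)$. You should add this hypothesis and use it explicitly in the cancellation.

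There are also two smaller bookkeeping issues. First, as written your one-large region is $\{x\in C_k^c,\ y\in C'\}$ with $C'$ fixed. This leaves $\{x\in C_k^c,\ y\in C_k\setminus C'\}$ uncovered, and Assumption~\ref{assu:kernel-3} alone does not control it. The paper's order of choices handles this:
\begin{enumerate}
\item First choose $C_k$ from Assumption~\ref{assu:kernel-5}. This makes both the coagulation and the gel integrands $O(\varepsilon\phi_0)$ whenever $x,y\notin C_k$; for $\phi(s,x,y)$ this holds because $y$ is then a large second argument.
\item Then choose $C_l\supseteq C_k$ from proximity with $C'=C_k$ and from uniform continuity of $f$.
\item Finally take a compact cutoff $\psi_\varepsilon$ equal to $1$ on the set given by Assumption~\ref{assu:kernel-4} for $C_l$.
\end{enumerate}
Then every pair outside the compact part either has one point in $C_k$ and the other in $C_l^c$ (with $z\in C_l^c$ almost everywhere), or has both points outside $C_k$. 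Second, in the $\pi_s\otimes\pi_s$ gel term the piece with $x$ compact and $y$ unrestricted is not a bounded integrand; take $\phi(t,x,y)=x\,l(t,y)$ with $l$ unbounded. Its part with $y\notin C_k$ must be estimated by the second condition of Assumption~\ref{assu:kernel-5}, as the paper does for $f^{\phi,1}$ and in $I_{3,t}$.
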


\begin{proof}[Proof of Proposition~\ref{prop:main continuous proposition}]

Fix $\varepsilon>0$.
    By Assumption~\ref{assu:kernel-5}, there exists $k\in \N$ such that
    \begin{equation}\label{eq:K/phi0<epsilon}
    \sup_{t\in[0,T], x\in C_k^c,y\in C_k^c} \frac{ \bar{K}(t,x,y)}{\phi_0(x,y)}<\varepsilon,
    \end{equation}
    and
    \[
    \sup_{t\in[0,T],x\in C_k^c,y\in E} \frac{ \phi(t,y,x)}{\phi_0(y,x)}<\varepsilon,
    \]
    where $(C_k)_{k\in \N}$ is an increasing collection of compact sets in $E$.
    Since the space $X_T$ is compact,  the function  $f\in\mathscr{C}_b(X_T)$ is uniformly continuous. Combined with Assumption~\ref{assu:kernel-3}, we can choose a compact set $C_l\supseteq C_k$ such that
    \[
    \sup_{t\in[0,T], x,z\in C_l^c,y\in C_k}|f(t,x,y,z)-f(t,\infty,y,\infty)|<\varepsilon,
    \]
   and 
    \[
    \sup_{t\in[0,T],x\in C_l^c,y\in C_k}\frac{ |\bar{K}(t,x,y)-\phi(t,x,y)|}{\phi_0(x,y)}<\varepsilon.
    \]
   For any set $D\subset E$, we introduce  the preimage set of $D$ in $E\times E$ as
    \[
    \textrm{pre}(D):=\{(x,y)\in E\times E:
    K(t,x,y,D)>0\text{ for some }t\in[0,T]\}.
    \]
   By Assumption~\ref{assu:kernel-4}, 
    there exists a compact set $C'\subset E\times E$ such that $C'\supseteq C_l\times C_l$ and for any $t\in[0,T]$, if $K(t,x,y,C_l)>0$ then $(x,y)\in C'$.
    Clearly, $(C')^c\cap \textrm{pre}(C_l)=\emptyset$.

    Let $\psi_{\varepsilon}:E\times E\mapsto [0,1]$ be a non-negative continuous symmetric function with compact support satisfying $\psi_{\varepsilon}(x,y)=1$ on $C'$, and let $\zeta_{\varepsilon}:E\mapsto [0,1]$ be a non-negative continuous function with compact support such that $\zeta_{\varepsilon}(y)=1$ on $C_k$. Set
\[
f^K_t(x,y):=\int_E f(t,x,y,z)K(t,x,y,\diff z),
\]
\[f_t^{\phi,1}(x,y):=f(t,x,\infty,\infty)\phi(t,y,x),\]
\[f_t^{\phi,2}(x,y):=f(t,\infty,y,\infty)\phi(t,x,y).\]
   
Combining these notations with the definition of $\Lambda_\pi^{K,t}$~\eqref{eq:equivalent to main continuous proposition}, we  decompose $$\croc{f,\Lambda_\pi^{K,t}}=\sum_{p=1}^5 I_{p,t}(\pi),$$
 where
 \begin{equation*}
            I_{1,t}(\pi)= \int_0^t
            \bigg(\frac{1}{2}\left\langle f^K_s\psi_{\varepsilon},\pi_s\otimes \pi_s\right\rangle 
            -\langle f_s^{\phi,2} \psi_{\varepsilon}, \pi_s\otimes \pi_s\rangle\bigg)\diff s,
    \end{equation*}

    \begin{equation*}
    \begin{aligned}
            I_{2,t}(\pi)=& \int_0^t
            \frac{1}{2}\bigg(\left\langle (f^K_s-f_s^{\phi,2})(1-\psi_{\varepsilon}) (1-\mathbbm{1}_{C_k^c\times C_k^c}),\pi_s\otimes \pi_s\right\rangle \\
            &\qquad-\langle f_s^{\phi,1} (1-\psi_{\varepsilon}) (1-\mathbbm{1}_{C_k^c\times C_k^c}), \pi_s\otimes \pi_s\rangle\bigg)\diff s,
            \end{aligned}
    \end{equation*}

    \begin{equation*}
            I_{3,t}(\pi)= \int_0^t
            \bigg(\frac{1}{2}\left\langle f^K_s(1-\psi_{\varepsilon}) \mathbbm{1}_{C_k^c\times C_k^c},\pi_s\otimes \pi_s\right\rangle 
            -\langle f_s^{\phi,2} (1-\psi_{\varepsilon}) \mathbbm{1}_{C_k^c\times C_k^c}, \pi_s\otimes \pi_s\rangle\bigg)\diff s,
    \end{equation*}

 \begin{equation*}
            I_{4,t}(\pi)=\int_0^t \int_{E\times E} f_s^{\phi,2}(x,y)\zeta_{\varepsilon}(y)  \pi_0(\diff x) \pi_s(\diff y)\diff s ,
    \end{equation*}
 \begin{equation*}
        \begin{aligned}
            I_{5,t}(\pi)=\int_0^t \int_{E\times E} f_s^{\phi,2}(x,y)\left(1-\zeta_{\varepsilon}(y)\right)  \pi_0(\diff x) \pi_s(\diff y)\diff s ,
        \end{aligned}
    \end{equation*}
The symmetrized loss term in $I_{2,t}$ follows by interchanging the
dummy variables $x$ and $y$ in one half of the integral; no symmetry
of $f$ is required.

We now analyse the difference $\Delta_p:=\sup_{0\le t\le T}|I_{p,t}(\sigma^i)-I_{p,t}(\sigma)|$ for each $p$.

\emph{Step 1: Analysis of $\Delta_1$.}
As $\sigma_t^i\to\sigma_t$ weakly, it follows that
    $\sigma_t^i\otimes\sigma_t^i\to\sigma_t\otimes\sigma_t$ weakly.
    Since  $\psi_{\varepsilon}$ has a compact support, say $C^*$, and 
    by Assumption~\ref{assu:kernel-4}, there exists a compact set
    $\tilde{C}$ such that $C^*\cap\operatorname{pre}(\tilde{C}^c)=\emptyset$,
    the functions $f_t^K\psi_\varepsilon$ and $f_t^{\phi,2}\psi_\varepsilon$
    are continuous and bounded on $E\times E$. Hence, we have
        \[
        \left\langle f_t^K\psi_\varepsilon, \sigma_t^i\otimes \sigma_t^i\right\rangle\to \left\langle f_t^K\psi_\varepsilon, \sigma_t\otimes \sigma_t\right\rangle
        \]
        and
        \[
        \left\langle f_t^{\phi,2}\psi_\varepsilon, \sigma_t^i\otimes \sigma_t^i\right\rangle\to \left\langle f_t^{\phi,2}\psi_\varepsilon, \sigma_t\otimes \sigma_t\right\rangle
        \]
    Moreover, using the bound $\bar{K}\le\phi_0$, on $\mathscr{D}_T^a$ we have
    \[
    \bigl|\bigl\langle f_t^K\psi_\varepsilon,\sigma_t^i\otimes\sigma_t^i\bigr\rangle\bigr|
    \le\|f\|_{\infty}\langle\phi_0,\sigma_t^i\otimes\sigma_t^i\rangle\le\|f\|_{\infty}a,
    \]
    and similarly
    \[
    \bigl|\bigl\langle f_t^{\phi,2}\psi_\varepsilon,\sigma_t^i\otimes\sigma_t^i\bigr\rangle\bigr|\le\|f\|_{\infty}\langle\phi_0,\sigma_t^i\otimes\sigma_t^i\rangle
    \le\|f\|_{\infty}a .
    \]
    These bounds hold for every $t\in[0,T]$, so the dominated convergence theorem
    yields
    \[\lim_{i\to\infty}\sup_{t\in[0,T]} \Big|I_{1,t}(\sigma^i)-I_{1,t}(\sigma)\Big|=0.\]

    \emph{Step 2: Analysis of $\Delta_2$.}
Since $1-\psi_{\varepsilon}=0$ on $C'$, $(C')^c\cap \textrm{pre}(C_l)=\emptyset$, $C_k\subseteq C_l$ and 
    $C_l\times C_l\subset C'$, the integrand in $I_{2,t}$ is non‑zero only for
    $(x,y)\in(C_l^c\times C_k)\cup(C_k\times C_l^c)$.
    For any $\mu\in\mathscr{D}_T^a$, we estimate the integral using the uniform continuity of $f$ and the proximity of $\bar{K}$ to $\phi$ in Assumption~\ref{assu:kernel-3}:
    \[
    \begin{aligned}
    &\int_0^T\!\Bigl|
    \bigl\langle(f^K_s-f_s^{\phi,2})(1-\psi_{\varepsilon})
    \mathbbm{1}_{C_l^c\times C_k},\mu_s\otimes\mu_s\bigr\rangle\Bigr|\diff s \\
    &\le\int_0^T\!\!\int_{C_l^c\times C_k}
    \Bigl[\int_E|f(s,x,y,z)-f(s,\infty,y,\infty)|
    \,K(s,x,y,\diff z) \\
    &\qquad+|f(s,\infty,y,\infty)|
    \,\bigl|\bar{K}(s,x,y)-\phi(s,x,y)\bigr|\Bigr]
    \, (1-\psi_{\varepsilon})(x,y)\mu_s(\diff x)\mu_s(\diff y)\diff s \\
    &\le\varepsilon\int_0^T\!\Bigl[
    \int_{E\times E}\phi_0(x,y)\,\mu_s(\diff x)\mu_s(\diff y)
    +\|f\|_{\infty}\int_{E\times E}\phi_0(x,y)\,\mu_s(\diff x)\mu_s(\diff y)
    \Bigr]\diff s \\
    &\le\varepsilon T(\|f\|_{\infty}+1)a .
    \end{aligned}
    \]
    Similarly,
    \begin{multline*}
    \int_0^T\!\bigl|\bigl\langle f_s^{\phi,1}(1-\psi_{\varepsilon})
    \mathbbm{1}_{C_l^c\times C_k},\mu_s\otimes\mu_s\bigr\rangle\bigr|\diff s\\
    \le \varepsilon \|f\|_{\infty} \int_0^T \int_{E\times E}
    \phi_0(y,x)\mathbbm{1}_{C_l^c}(x)
    \mu_s(\diff x) \mu_s(\diff y)\diff s
    \le\varepsilon T\|f\|_{\infty}a .
    \end{multline*}
    The same estimates hold for the symmetric part $(C_k\times C_l^c)$.
   By summing up all terms, we have
$$\lim_{i\to\infty}\sup_{t\in[0,T]} \Big|I_{2,t}(\sigma^i)-I_{2,t}(\sigma)\Big| \leq 2\varepsilon T(2\|f\|_{\infty}+1)a.$$

   \emph{Step 3: Analysis of $\Delta_3$.}
For any $\mu\in\mathscr{D}_T^a$, we utilize the tail control from Assumption~\ref{assu:kernel-5}
    \[
    \begin{aligned}
    \int_0^T\!\bigl|\bigl\langle
    f^K_s(1-\psi_{\varepsilon})\mathbbm{1}_{(C_k^c)^2},
    \mu_s\otimes\mu_s\bigr\rangle\bigr|\diff s
    &\le\|f\|_{\infty}\int_0^T\!\int_{(C_k^c)^2}
    \bar{K}(s,x,y)\,\mu_s(\diff x)\mu_s(\diff y)\diff s \\
    &\le\varepsilon\|f\|_{\infty}
    \int_0^T\!\int_{(C_k^c)^2}\phi_0(x,y)
    \,\mu_s(\diff x)\mu_s(\diff y)\diff s \\
    &\le\varepsilon T\|f\|_{\infty}a .
    \end{aligned}
    \]
Similarly,
\[
\begin{aligned}
&\int_0^T\!\Big|\left\langle
f_s^{\phi,2}(1-\psi_{\varepsilon})
\mathbbm{1}_{(C_k^c)^2},
\mu_s\otimes \mu_s\right\rangle\Big|\diff s\\
&\hspace{35mm}\leq \varepsilon T \|f\|_{\infty} a.
\end{aligned}
\]
These yield:
        $$\lim_{i\to\infty}\sup_{t\in[0,T]} \Big|I_{3,t}(\sigma^i)-I_{3,t}(\sigma)\Big|\leq 3\varepsilon T\|f\|_{\infty}a.$$

        \emph{Step 4: Analysis of $\Delta_4$.}
We split the difference into two terms:
    \[
    \begin{aligned}
    &\Bigl|\int_E f(t,\infty,y,\infty)\zeta_{\varepsilon}(y)
    \Bigl(\int_E\phi(t,x,y)\,\sigma_0^i(\diff x)\Bigr)\sigma_t^i(\diff y) \\
    &\qquad-\int_E f(t,\infty,y,\infty)\zeta_{\varepsilon}(y)
    \Bigl(\int_E\phi(t,x,y)\,\sigma_0(\diff x)\Bigr)\sigma_t(\diff y)\Bigr| \\
    &\le\Bigl|\int_E f(t,\infty,y,\infty)\zeta_{\varepsilon}(y)
    \Bigl(\int_E\phi(t,x,y)\,(\sigma_0^i-\sigma_0)(\diff x)\Bigr)
    \sigma_t^i(\diff y)\Bigr| \\
    &\qquad+\Bigl|\int_E f(t,\infty,y,\infty)\zeta_{\varepsilon}(y)
    \int_E\phi(t,x,y)\,\sigma_0(\diff x)\,
    \bigl[\sigma_t^i(\diff y)-\sigma_t(\diff y)\bigr]\Bigr| .
    \end{aligned}
    \]
    The function $y\mapsto f(t,\infty,y,\infty)\zeta_{\varepsilon}(y)
    \int_E\phi(t,x,y)\sigma_0(\diff x)$ is continuous and bounded, so the second
    term tends to $0$ as $i\to\infty$ because $\sigma_t^i\to\sigma_t$ weakly.
    For the first term, note that $\zeta_{\varepsilon}$ has compact support.
    Given $\varepsilon'>0$ we can choose $i_0$ such that for all $i\ge i_0$,
    \[
    \sup_{y\in \textrm{supp}(\zeta_{\varepsilon})}\Bigl|\int_E\phi(t,x,y)\,\sigma_0^i(\diff x)
    -\int_E\phi(t,x,y)\,\sigma_0(\diff x)\Bigr|<\varepsilon'.
    \]
    Then
    \begin{multline*}
    \Bigl|\int_E f(t,\infty,y,\infty)\zeta_{\varepsilon}(y)
    \Bigl(\int_E\phi(t,x,y)\,(\sigma_0^i-\sigma_0)(\diff x)\Bigr)
    \sigma_t^i(\diff y)\Bigr|\\
    \le\varepsilon'\|f\|_{\infty}\int_E\sigma_t^i(\diff y)
    \le\varepsilon'\|f\|_{\infty}\sqrt{a}.
    \end{multline*}
    Letting $i\to\infty$ and then $\varepsilon'\to0$ shows pointwise
    convergence for each $t$.  Moreover, each of the two original
    integrands is bounded in absolute value by $\|f\|_\infty a$ after
    integration in $y$, by the defining bounds of $\mathscr D_T^a$ and
    $\phi\leq\phi_0$.  The dominated convergence theorem therefore gives
    $$\lim_{i\to\infty}\sup_{t\in[0,T]} \Big|I_{4,t}(\sigma^i)-I_{4,t}(\sigma)\Big|=0.$$
        
        \emph{Step 5: Analysis of $\Delta_5$.}
For any $\mu\in \mathscr{D}_T^a$, using the fact $1-\zeta_\eps$ vanishes on $C_k$, we apply the tail estimate for $\phi$ from Assumption~\ref{assu:kernel-5} and get
        \[
        \begin{aligned}
            &\int_0^T  \Big| \int_{E} f(t,\infty,y,\infty) (1-\zeta_{\varepsilon}(y))\Big(\int_E\phi(t,x,y)\mu_0(\diff x)\Big) \mu_t(\diff y) \Big|\diff t\\
            \leq& \|f\|_{\infty}\int_0^T  \int_{E\times E}\phi(t,x,y)\mathbbm{1}_{C_k^c}(y)\mu_0(\diff x) \mu_t(\diff y)\diff t\\
            \leq& \varepsilon\|f\|_{\infty}\int_0^T \Big|\int_{E\times E}\phi_0(x,y) \mu_0(\diff x) \mu_t(\diff y) \Big|\diff t\\
            \leq& \varepsilon T\|f\|_{\infty} a.
        \end{aligned}
        \]
Hence,
        $$\lim_{i\to\infty}\sup_{t\in[0,T]} \Big|I_{5,t}(\sigma^i)-I_{5,t}(\sigma)\Big|\leq 2\varepsilon T\|f\|_{\infty}a.$$

 Combining all steps and taking  $\varepsilon\to 0$, the proof is complete.
\end{proof}

\begin{lemma}\label{lemma: cross term to 0}
 Under Assumption~\ref{assu:kernel} and  Assumption~\ref{assu:initial},
for any $\delta>0$, there exists $N_1$ sufficiently large, such that for all $N>N_1$,
   \begin{equation}\label{eq:cross term to 0}\frac{1}{N} \int_0^T \int_{E} \bar{K}(t,x,x)\mu_t^N(\diff x)\diff t \le \delta,\qquad {a.s.}.
       \end{equation}
 
\end{lemma}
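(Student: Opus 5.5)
The plan is to split the diagonal integral into a compact part and a tail part. On the compact part $\bar K(t,x,x)$ is uniformly bounded, so the factor $1/N$ does the work. On the tail, Assumption~\ref{assu:kernel-5} makes $\bar K(t,x,x)$ a small multiple of $\phi_0(x,x)$. The diagonal $\phi_0$-moment, divided by $N$, is dominated by the pair moment $\langle\phi_0,\mu^N_t\otimes\mu^N_t\rangle$, and Lemma~\ref{lemmadta} bounds that pair moment by $a$ almost surely.

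\textbf{Step 1 (moment bounds).} Take $N>N_0$. By Lemma~\ref{lemmadta}, $\mu^N\in\mathscr D_T^a$ almost surely. Since $\Phi\geq 1$, this gives for every $t\in[0,T]$
\[
\langle 1,\mu_t^N\rangle^2=\langle1,\mu^N_t\otimes\mu^N_t\rangle\leq a
\quad\text{and}\quad
\langle\phi_0,\mu^N_t\otimes\mu^N_t\rangle\leq a .
\]
Write $\mu^N_t=N^{-1}\sum_{i=1}^{n_t}\delta_{x_i(t)}$. Because $\phi_0\geq0$, the diagonal terms $i=j$ are part of the full double sum, so
\[
\frac1N\int_E\phi_0(x,x)\,\mu^N_t(\diff x)=\frac1{N^2}\sum_{i}\phi_0(x_i,x_i)\leq\frac1{N^2}\sum_{i,j}\phi_0(x_i,x_j)=\langle\phi_0,\mu^N_t\otimes\mu^N_t\rangle\leq a .
\]

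\textbf{Step 2 (tail part).} Fix $\delta>0$ and set $\varepsilon:=\delta/(2Ta)$. The first condition of Assumption~\ref{assu:kernel-5}, with $\phi_0$ as the common control, lets us pick $k$ such that $\bar K(t,x,x)\leq\varepsilon\phi_0(x,x)$ for all $t\in[0,T]$ and $x\in C_k^c$; here we take $x=y$ in that condition. Combining with Step 1,
\[
\frac1N\int_0^T\!\!\int_{C_k^c}\bar K(t,x,x)\,\mu^N_t(\diff x)\diff t\leq \varepsilon T a=\delta/2 .
\]

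\textbf{Step 3 (compact part).} By Assumption~\ref{assu:kernel-2}, $\bar K(t,x,x)\leq\phi_0(x,x)$. Since $\phi_0$ is continuous and $C_k$ is compact, $M_k:=\sup_{x\in C_k}\phi_0(x,x)$ is finite. Therefore
\[
\frac1N\int_0^T\!\!\int_{C_k}\bar K(t,x,x)\,\mu^N_t(\diff x)\diff t\leq\frac{TM_k\sqrt a}{N}.
\]
Choose $N_1\geq N_0$ so large that $TM_k\sqrt a/N_1\leq\delta/2$. Adding the two parts gives \eqref{eq:cross term to 0} almost surely for all $N>N_1$.

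There is no real obstacle in this argument. The step that needs care is the diagonal comparison in Step 1. Its key point is that the empirical measure carries mass $1/N$ per cluster, so $1/N$ times a diagonal integral is exactly the diagonal part of $\mu^N_t\otimes\mu^N_t$, which $\phi_0\geq 0$ lets us bound by the full pair moment. It also uses that the tail condition in Assumption~\ref{assu:kernel-5} holds uniformly in $t$, so $k$ can be chosen before integrating in time.
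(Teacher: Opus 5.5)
Your proposal is correct and follows the paper's proof essentially step for step. It uses the same split of $E$ into $C_k$ and $C_k^c$, the factor $1/N$ together with the mass bound $\sqrt a$ on the compact part, and the tail bound $\bar K(t,x,x)\le\varepsilon\phi_0(x,x)$ combined with the comparison of the diagonal moment against the full pair moment on $\mathscr D_T^a$. The only difference is cosmetic: you bound $\bar K$ on $C_k$ through $\phi_0$, whereas the paper bounds the kernel there directly by a constant.
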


\begin{proof}[Proof of Lemma~\ref{lemma: cross term to 0}]
By Lemma~\ref{lemmadta}, for any $N>N_0$, the process $\mu^N$ belongs to the set $\mathscr{D}_T^a$ almost surely.
To bound the integral of the kernel along the diagonal, fix $\varepsilon>0$, for all $t\in[0,T]$, we partition $E$ into a compact set $C_k$ and its complement $C_k^c$ as in the proof of Proposition~\ref{prop:main continuous proposition}. On the compact set $[0,T]\times C_k$,  the kernel $K$ is upper bounded by some constant $M>0$ , we have,
        \[
        \sup_{0\le t\le T}\frac{1}{N}\int_{C_k} \bar{K}(t,x,x)\mu_t^N(\diff x)\leq \frac{M}{N}\sup_{0\le t\le T}\langle1,\mu_t^N\rangle\le \frac{M\sqrt{a}}{N}.
        \]
For any $\delta>0$, by choosing $N$ sufficiently large, we have
        \[
        \int_0^T \frac{1}{N} \int_{C_k} \bar{K}(t,x,x)\mu_t^N(\diff x)\diff t\leq\delta/2.
        \]

On the complement set $C_k^c$, by~\eqref{eq:K/phi0<epsilon}, we can bound the integrand by $\varepsilon\phi_0(x,x)$. Note that the diagonal term
\[\frac{1}{N} \int_{E} \phi_0(x,x)\mu_t^N(\diff x)\le \int_{E\times E}\phi_0(x,y)\mu_t^N(\diff x)\mu_t^N(\diff y).\]
On  the set $\mathscr{D}_T^a$, the last term is bounded by $a$. Thus, by choosing $\varepsilon\le \delta/(2aT)$,  we have 
        \[
        \int_0^T\frac{1}{N} \int_{C_k^c} \bar{K}(t,x,x)\mu_t^N(\diff x)\diff t\leq\delta/2.
        \]
Combining these two bounds, we conclude that \eqref{eq:cross term to 0} holds almost surely for N sufficiently large.
   
\end{proof}

\subsection{Proof of the hydrodynamic limit}

\begin{proof}[Proof of Theorem \ref{thm:LLN}]
We first give the compactness and martingale estimates; this also records
why the time-inhomogeneous extension of the argument in
\cite{AIMconvergence} is valid.  Lemma~\ref{lemmadta} implies compact
containment, since
\[
\mathcal K_a:=\{\rho:\langle\Phi,\rho\otimes\rho\rangle\leq a\}
\]
is compact in $\mathscr M_f^+(E)$.  Indeed, the term $1$ bounds total
mass, while the compact sublevels in Assumption~\ref{assu:initial-3}
make the product measures tight; their identical marginals are then
tight as well.

Fix $g\in\mathscr C_b(E)$.  The semimartingale decomposition is
\[
\langle g,\mu_t^N\rangle=\langle g,\mu_0^N\rangle
+A_t^{g,N}+M_t^{g,N},
\]
where
\[
A_t^{g,N}=\int_0^t L_s^{K,N}\langle g,\mu_s^N\rangle\diff s
\]
and $M^{g,N}$ is a square-integrable martingale.  Since
$|L[g](x,y,z)|\leq3\|g\|_\infty$ and
$\bar K\leq\phi_0\leq\Phi$, for every stopping time $\tau_N\leq T$
and $0\leq\theta\leq\delta$,
\begin{equation}\label{eq:LLN-Aldous-drift}
|A_{(\tau_N+\theta)\wedge T}^{g,N}-A_{\tau_N}^{g,N}|
\leq \frac32\,\|g\|_\infty a\delta.
\end{equation}
Moreover,
\begin{equation}\label{eq:LLN-mart-qv}
\mathbb E\!\left[
\langle M^{g,N}\rangle_T\right]
\leq \frac{9\|g\|_\infty^2aT}{2N},
\qquad
\mathbb E\!\left[\sup_{t\leq T}|M_t^{g,N}|^2\right]
\leq\frac{18\|g\|_\infty^2aT}{N}.
\end{equation}
The first estimate follows directly from the generator and the second
from the compensator and Doob's inequality.  Equations
\eqref{eq:LLN-Aldous-drift}--\eqref{eq:LLN-mart-qv}, first for a countable
convergence-determining family in $\mathscr C_b(E)$ and then together
with compact containment, verify the standard Aldous tightness criterion
on $\mathscr D_T(\mathscr M_f^+(E))$; see
\cite[Chapter~3, Section~8]{ethier}.  Hence $(\mu^N)$ is tight.

For every such $g$, a jump of $\langle g,\mu^N\rangle$ has absolute
size at most $3\|g\|_\infty/N$.  The preceding Aldous estimate therefore
gives $C$-tightness of all the scalar projections.  Compact containment
and the convergence-determining property imply that every limit path is
almost surely continuous.

Now take a weakly convergent subsequence.  By the Skorokhod
representation theorem we may realize it so that $\mu^N\to\mu$ almost
surely in the $J_1$ topology.  Lemma~\ref{lemmadta} gives
$\mu^N,\mu\in\mathscr D_T^a$.  Since $\mu$ is continuous, this convergence
is uniform after testing against every $h\in\mathscr C_b(E)$
\cite[p.~112]{billing}, and hence verifies
\eqref{eq:convergence in uniform topology in main prop}.  Also
$\mu_0=\nu$.  For every $t\in[0,T]$, the function
\[
\int_E \phi(t,x,y) \, \mu_0^N(\diff x)
\]
is a finite sum of continuous functions in $y$,  hence continuous. The uniform convergence condition in Assumption~\ref{assu:initial-2} guarantees the limit map
\[
y \mapsto \int_E \phi(t,x,y) \, \mu_0(\diff x)
\]
remains continuous on $E$. Thus all hypotheses of
Proposition~\ref{prop:main continuous proposition} are satisfied.
Consequently, for all $g\in\mathscr C_c(E)$, almost surely,
\[
\lim_{N \to \infty} \sup_{t \in [0,T]} \left| \langle {L[\bar{g}]}, \Lambda_{\mu^N}^{K,t} \rangle - \langle L[\bar{g}], \Lambda_{\mu}^{K,t} \rangle \right| = 0
\]
and
\[
\lim_{N \to \infty} \sup_{t \in [0,T]} \bigl| \bigl( \langle g, \mu_t^N \rangle - \langle g, \mu_0^N \rangle \bigr) - \bigl( \langle g, \mu_t \rangle - \langle g, \mu_0 \rangle \bigr) \bigr| = 0.
\]

The finite system conserves
$\langle\phi(s,\cdot,y),\mu_t^N\rangle$ for every fixed $s,y$.
Consequently, its gel part in $\Lambda_{\mu^N}^{K,t}$ is zero, and the
generator identity gives
\[
\sup_{t\leq T}\left|
\langle g,\mu_t^N-\mu_0^N\rangle
-\langle L[\bar g],\Lambda_{\mu^N}^{K,t}\rangle
-M_t^{g,N}\right|
\leq\frac{3\|g\|_\infty}{2N}
\int_0^T\!\int_E\bar K(s,x,x)\mu_s^N(\diff x)\diff s.
\]
The right-hand side tends to zero by
Lemma~\ref{lemma: cross term to 0}, while $M^{g,N}$ tends to zero in
$L^2$ by \eqref{eq:LLN-mart-qv}.  Passing to the limit with the preceding
uniform convergences yields
\[
\langle \mu_t, g \rangle - \langle \mu_0, g \rangle - \langle L[\bar{g}], \Lambda_{\mu}^{K,t} \rangle = 0 \qquad \text{for all } t \in [0,T] \text{ almost surely}.
\]
Taking a countable sup-norm dense family of compactly supported test
functions in $\mathscr C_0(E)$ and then using uniform approximation shows that the weak
identity holds simultaneously for every $g\in\mathscr C_c(E)$.  The
$\Phi$-bounds imply the two integrability requirements in
Definition~\ref{def:Flory}.  Finally, the finite systems conserve
$\langle\phi(s,\cdot,y),\mu_t^N\rangle$ for every fixed $s,y$; lower
semicontinuity and the Portmanteau theorem therefore give Item~4 of that
definition in the limit. Hence $\mu$ is a solution to the
time-inhomogeneous multi-type Flory equation~\eqref{lambda-Flory}.

\end{proof}

\section{Proofs of the large-deviation bounds}\label{section 4}

We introduce the exponential martingales associated with changing the
kernel from $K$ to an absolutely continuous kernel $K'$. To retain the
marks $(x,y,z)$ of an event, we work on the usual marked canonical
extension of the path space and project to the coordinate path at the end.

\subsection{The evolution equation driven by an orthogonal martingale measure}
The $\mathscr M_f^+(E)$-valued process $\mu^N$ has generator $L^{K,N}$
from Definition~\ref{def:process}.  We use the orthogonal-martingale-measure
construction of \cite[Section~4]{ldpsun} to write its semimartingale
decomposition on $E^2\times E$.  The following notation retains the marks of
individual coagulation events.

Let $\mathcal N^N(\diff s\diff x\diff y\diff z)$ be a marked counting
measure on the ordered space $E^2\times E$.  On the marked canonical
extension, attach an independent fair orientation to every coagulation
event: if the unordered pair $\{x,y\}$ produces $z$ at time $s$, place
one atom at either $(s,x,y,z)$ or $(s,y,x,z)$, each with probability
$1/2$. (When $x=y$, the two choices coincide.)  This convention neither
duplicates nor fractionally splits an event, and its compensator is
$N\kappa^{N,K}[\mu^N]$, where for
$\xi\in\mathscr D_T(\mathscr M_N^+(E))$,
\[\kappa^{N,K}[\xi]((0,t]\times U):=\frac{1}{2}\int_0^t \int_U K(s,x,y,\diff z)\xi_s(\diff x)(\xi_s-\frac{\delta_x}{N})(\diff y) \diff s,\]
and $\kappa^{N,K}[\xi] \equiv 0$ otherwise.

Define the compensated orthogonal martingale measure
\[
\widetilde{\mathcal N}^{N,K}:=
\mathcal N^N-N\kappa^{N,K}[\mu^N].
\]
For Borel sets $U_1,U_2\subset E^2\times E$, its predictable covariance is
\[
\croc{\widetilde{\mathcal N}^{N,K}_t(U_1),
\widetilde{\mathcal N}^{N,K}_t(U_2)}
=N\kappa^{N,K}[\mu^N]((0,t]\times(U_1\cap U_2)).
\]
In this framework, $\mu^N$ satisfies the following evolution equation:
\[
\begin{aligned}
\mu_t^N=&\mu_0^N+\frac{1}{N}\int_0^t\int_{E^2\times E} (\delta_{z} - \delta_{x} -\delta_{y}) \mathcal N^N(\diff s\diff x\diff y\diff z)\\
=&\mu_0^N+\frac{1}{N}\int_0^t\int_{E^2\times E} (\delta_{z} - \delta_{x} -\delta_{y}) \widetilde{\mathcal N}^{N,K}(\diff s\diff x\diff y\diff z)\\
&+ \int_0^t\int_{E^2\times E} (\delta_{z} - \delta_{x} -\delta_{y}) \kappa^{N,K}[\mu^N](\diff s\diff x\diff y\diff z)
\end{aligned}
\]
Also, for any test function $g\in \mathscr{C}_b^{1,0}([0,T]\times E)$,
write $\diff\mathbf u=\diff s\diff x\diff y\diff z$.  Then
\begin{equation}\label{eq:mart problem g}
    \begin{aligned}
        \croc{g_t,\mu_t^N} &- \croc{g_0,\mu_0^N}
        -\int_0^t \croc{\partial_sg_s,\mu_s^N}\diff s\\
        &-\int_{(0,t]\times E^2\times E} L[g_s](x,y,z)
        \kappa^{N,K}[\mu^N](\diff\mathbf u)\\
        =& \frac{1}{N}\int_{(0,t]\times E^2\times E}L[g_s](x,y,z)\\
        &\hspace{22mm}\times
        \widetilde{\mathcal N}^{N,K}(\diff\mathbf u).
    \end{aligned}
\end{equation}
Clearly, the right-hand side of \eqref{eq:mart problem g} is a martingale.

\subsection{Coupling}

\begin{prop}[Well-posedness of the finite particle system]\label{prop:mart problem}
    For any kernel $K$ satisfying Assumption~\ref{assu:kernel} with $\phi$ and $\phi_0$ and any initial measure $\mu_0^N\in\mathscr{M}_N^+(E)$, there exists a unique probability measure $\mathbb{P}^{N,K}$ on
    $\mathscr{D}_T(\mathscr{M}_f^+(E))$
    such that for all $F \in \mathscr{C}_b(\mathscr{M}_f^+(E))$,
    \begin{equation}\label{eq:martingale problem F}
        F(\mu_t)-F(\mu_s)- \int_s^t L_r^{K,N}F(\mu_r)\diff r
    \end{equation}
    is an $(\mathscr F_t,\mathbb{P}^{N,K})$-martingale, where
    $\mathscr F_t:=\bigcap_{\varepsilon>0}\sigma\{\mu_u:
    0\leq u\leq (t+\varepsilon)\wedge T\}$ is the right-continuous
    natural filtration of the coordinate process. Moreover, for all
    $G\in \mathscr{C}_b^{1,0}([0,T]\times \mathscr{M}_f^+(E))$,
    \[
    G(t,\mu_t)-G(s,\mu_s)-\int_s^t \partial_r G(r,\mu_r) \diff r - \int_s^t L_r^{K,N}G_r(\mu_r)\diff r
    \]
    is also a martingale under the same filtration and probability.
\end{prop}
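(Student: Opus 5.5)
Since $N$ is fixed, the process is a continuous-time Markov jump process on $\mathscr M_N^+(E)$ in which each jump merges two atoms and therefore lowers the number of atoms $n$ by one. Starting from $\mu_0^N$ with $n_0$ atoms, at most $n_0-1$ jumps can occur on $[0,T]$. I would prove existence and uniqueness by an explicit sequential construction, so no general martingale-problem machinery for unbounded generators is needed.

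\emph{Step 1: bounded total rate.} For a state $\mu=N^{-1}\sum_{i=1}^k\delta_{y_i}$ the total jump rate is
\[
\lambda_t(\mu)=N^{-1}\sum_{i<j}\bar K(t,y_i,y_j)\le N^{-1}\sum_{i<j}\phi_0(y_i,y_j),
\]
using Assumption~\ref{assu:kernel-2}. Every reachable state has total mass $N$. Since $\phi_0$ is doubly sub-conservative, Lemma~\ref{lemmadta} shows that $\langle\phi_0,\mu\otimes\mu\rangle$ is nonincreasing along any jump sequence. Hence $\lambda_t\le N\langle\phi_0,\mu_0^N\otimes\mu_0^N\rangle=:\Lambda^*<\infty$ uniformly over times and reachable states.

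\emph{Step 2: existence.} Given the state after the $j$-th jump at time $T_j$, I sample the next jump time from the time-inhomogeneous hazard $\lambda_t$. This hazard is measurable and bounded in $t$ by Assumption~\ref{assu:kernel-1} and Step 1. At that time I choose the pair $\{i,j\}$ and the outcome $z$ according to $K(t,y_i,y_j,\diff z)/\lambda_t$. The kernel $K(t,\cdot,\cdot,\cdot)$ is measurable, indeed weakly continuous, so this is a legitimate sequence of probability kernels, and Ionescu--Tulcea gives a law $\mathbb P^{N,K}$ on $\mathscr D_T(\mathscr M_f^+(E))$. Finitely many jumps means the paths are càdlàg with no explosion. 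For $F\in\mathscr C_b$, the process $F(\mu_t)-F(\mu_0)-\int_0^tL_r^{K,N}F(\mu_r)\diff r$ is the compensated sum over jumps of $F(\mu_{T_j})-F(\mu_{T_j-})$. Its compensator is exactly the stated integral, since the jump measure is $\sum_{i<j}K(t,y_i,y_j,\diff z)/N$. The martingale property follows from the standard compensator formula for marked point processes; integrability holds because $|L^{K,N}_rF|\le2\|F\|_\infty\Lambda^*$.

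\emph{Step 3: time-dependent test functions.} For $G\in\mathscr C_b^{1,0}$ I would use the same argument: between jumps, $G(t,\mu_t)$ evolves by $\partial_rG$ because $\mu$ is constant there. Jumps contribute $L_r^{K,N}G_r$. Boundedness again gives a true martingale. Equivalently, one can apply the product/Itô formula to the piecewise-constant path.

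\emph{Step 4: uniqueness (main obstacle).} Let $\mathbb Q$ be any solution of the martingale problem. The key step is to show that under $\mathbb Q$ the jump times and marks have the same conditional laws as in Step 2. I would apply the martingale property to functions $F(\mu)=h(\mu)\mathbbm 1\{\text{number of atoms}=k\}$ with $h\in\mathscr C_b$. The atom-count indicator is continuous on $\mathscr M_N^+(E)$ up to a continuous extension, because mass $1/N$ atoms are weakly separated; this needs checking. From this I would deduce that the counting process of jumps with marks has $\mathbb Q$-compensator $\lambda_t\diff t\otimes(\text{jump kernel})$. Jacod's theorem, that a marked point process is determined by its compensator and initial value, then identifies $\mathbb Q=\mathbb P^{N,K}$. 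If the continuity of the atom count causes trouble, the fallback is the standard uniqueness result for bounded-rate time-inhomogeneous jump generators, via the forward equation and Gronwall (Ethier--Kurtz, Ch.~4), applied on the invariant set of reachable states.
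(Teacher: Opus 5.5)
Your proposal is correct and follows essentially the same route as the paper. The paper notes that the total jump rate is finite at each state and that explosion is impossible because every jump removes a cluster. It then cites the standard time-inhomogeneous pure-jump construction (Ethier--Kurtz, Section~4.7.A) for existence and uniqueness, and obtains both martingale identities from Dynkin's formula. Your uniform bound $\lambda_t\le N\langle\phi_0,\mu_0^N\otimes\mu_0^N\rangle$, which follows from double sub-conservativity, is a worthwhile addition that the paper leaves implicit: it supplies the integrability needed for a true martingale.

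The concern you flag in Step~4 disappears once you note that the number of atoms, counted with multiplicity, equals $N\langle 1,\mu\rangle$. This is weakly continuous, so $\chi(N\langle 1,\mu\rangle)$ with a continuous bump $\chi$ serves as the cutoff. One small correction: in Step~2 the mark law should be $N^{-1}K(t,y_i,y_j,\diff z)/\lambda_t$, not $K(t,y_i,y_j,\diff z)/\lambda_t$.
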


\begin{proof}
The total jump rate is finite at every state, because a state contains
only finitely many clusters and each $K(t,x,y,\cdot)$ is finite.  Every
jump reduces the number of clusters by one, so explosion is impossible.
The standard construction of a time-inhomogeneous pure-jump process then
gives existence and uniqueness; see \cite[Section~4.7.A]{ethier}.  The two
martingale identities follow from Dynkin's formula.
\end{proof}

\begin{defi}[Absolutely continuous kernels]\label{Def:perturbed}
    We say that $K'\ll K$ if, for every Borel set $V\subset E$,
    every $x,y\in E$, and almost every $t\in[0,T]$,
    \[K(t,x,y,V)=0\quad\Rightarrow\quad K'(t,x,y,V)=0.\]
    We write
    \[\eta^{(K'|K)}(t,x,y,z):=
      \frac{\diff K'(t,x,y,\cdot)}{\diff K(t,x,y,\cdot)}(z).\]
    We always choose a jointly measurable version of this density, which
    exists because $E$ is a standard Borel space.
    In particular, for any $f \in \mathscr C_b^{1,0}([0,T]\times E)$, we define a $f$-perturbed kernel $K^f$ by
    \[
    K^f(t,x,y,\diff z) = e^{f_t(z)-f_t(x)-f_t(y)} K(t,x,y,\diff z).
    \]
    Then the corresponding Radon-Nikodym derivative is given by
    \[
    \eta^{(K^f|K)}(t,x,y,z)=e^{L[f_t](x,y,z)}.
    \]
\end{defi}

\begin{defi}[Exponential martingales]
    For any two kernels $K'\ll K$ with the Radon-Nikodym derivative $\eta^{(K'|K)}$, for any $\mu \in \mathscr{D}_T(\mathscr{M}_f^+(E))$, define
    \begin{equation}
        \begin{aligned}
            \mathcal{M}_t^{N,(K'|K)} :=& \exp\left\{\int_0^t \int_{E\times E\times E} \log\left(\eta^{(K'|K)}(s,x,y,z)\right)\mathcal N^N(\diff s\diff x\diff y\diff z)\right.\\
            &\left. - N\int_0^t \int_{E\times E\times E}\left(\kappa^{N,K'}[\mu](\diff s\diff x\diff y\diff z)-\kappa^{N,K}[\mu](\diff s\diff x\diff y\diff z)\right)
            \right\}.
        \end{aligned}
    \end{equation}
    Especially, for the $f$-perturbed kernel $K^f = e^{L[f]}K$, we have
    \begin{equation}\label{eq:expmart f-perturbed}
        \begin{aligned}
            \mathcal{M}_t^{N,(K^f|K)} :=& \exp\left\{\int_0^t \int_{E\times E\times E} \left[f_s(z)-f_s(x)-f_s(y)\right]\mathcal N^N(\diff s\diff x\diff y\diff z) \right.\\
            &\left. -N \int_0^t \int_{E\times E\times E}\left(e^{f_s(z)-f_s(x)-f_s(y)}-1\right)\kappa^{N,K}[\mu](\diff s\diff x\diff y\diff z)
            \right\}.
        \end{aligned}
    \end{equation}
    For any $s,t\in [0,T]$, let
    \[\mathcal{M}_{s,t}^{N,(K'|K)} := \frac{\mathcal{M}^{N,(K'|K)}_t}{\mathcal{M}^{N,(K'|K)}_s}.\]
\end{defi}

\begin{lemma}[Change of measure]\label{lemma:coupling}
    For any two kernels $K$ and $K'$ satisfying Assumption~\ref{assu:kernel} for some $\phi,\phi_0$ and $\phi',\phi_0'$ respectively, any deterministic initial configuration $\mu_0^N\in \mathscr{M}_N^+(E)$ satisfying 
    \[
    \croc{1+\phi_0(x,y)+\phi_0'(x,y),\mu_0^N(\diff x)\mu_0^N(\diff y)}<\infty
    \]
    and suppose $K'\ll K$ with a bounded Radon--Nikodym derivative
    $\eta^{(K'|K)}$. Then the process
    \[
    \big((\mathcal{M}_t^{N,(K'|K)})_{t\geq 0},(\mathscr{F}_t)_{t\geq0}, \mathbb{P}^{N,K}\big)
    \]
    is a martingale. Moreover, $\mathbb{P}^{N,K'}$ is locally absolutely
    continuous with respect to $\mathbb{P}^{N,K}$ with density process
    $\mathcal{M}_t^{N,(K'|K)}$. If $\eta^{(K'|K)}$ is bounded away from
    zero, the two laws are locally equivalent. In particular,
    \begin{equation}
        \begin{aligned}
            \frac{\diff\mathbb{P}^{N,K^f}}{\diff\mathbb{P}^{N,K}}\Big|_{\mathcal{F}_t} =& \mathcal{M}_t^{N,(K^f|K)}\\
            =& \exp \left\{ N\left(\croc{f_t,\mu_t}-\croc{f_0,\mu_0}-\int^t_0 \croc{\partial_s f_s,\mu_s}\diff s\right)\right.\\
            &\left. -N \int^t_0\int_{E\times E\times E} \left(e^{f_s(z)-f_s(x)-f_s(y)}-1\right) \kappa^{N,K}[\mu](\diff s\diff x\diff y\diff z)\right\}.
        \end{aligned}
    \end{equation}
\end{lemma}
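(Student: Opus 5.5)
The plan is the standard Girsanov argument for marked point processes, using that the finite system has only finitely many clusters and that no cluster is ever created. Write $\eta=\eta^{(K'|K)}\le C_\eta$. By It\^o's formula for pure-jump processes, $\mathcal M^{N,(K'|K)}$ solves
\[
\mathcal M_t=1+\int_{(0,t]\times E^2\times E}\mathcal M_{s-}\bigl(\eta(s,x,y,z)-1\bigr)\widetilde{\mathcal N}^{N,K}(\diff\mathbf u),
\]
since the compensator of $\mathcal N^N$ is $N\kappa^{N,K}[\mu^N]$ and $\int(\eta-1)N\kappa^{N,K}=N(\kappa^{N,K'}-\kappa^{N,K})$. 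It is therefore a nonnegative local martingale, hence a supermartingale. To upgrade it to a true martingale, I use that along any path the number of clusters $n_t\le n_0$ is nonincreasing, and only finitely many jumps occur (at most $n_0-1$). The compensator total mass satisfies
\[
N\kappa^{N,K}[\mu^N]((0,T]\times E^2\times E)\le \tfrac N2 T\,\langle\phi_0,\mu^N_s\otimes\mu^N_s\rangle\le \tfrac N2 T\,\langle\phi_0,\mu_0^N\otimes\mu_0^N\rangle<\infty,
\]
using Assumption~\ref{assu:kernel-2} and the monotonicity of doubly sub-conserved quantities (as in the proof of Lemma~\ref{lemmadta}); the same holds for $K'$ with $\phi_0'$. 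So $\mathcal M_t\le C_\eta^{\,n_0}\exp\{N\kappa^{N,K}((0,T]\times E^3)\}$ is bounded by a deterministic constant, and a bounded local martingale is a martingale.

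Next I identify the law. I define $\mathbb Q$ on $\mathscr F_T$ (of the marked extension) by $\diff\mathbb Q=\mathcal M_T\diff\mathbb P^{N,K}$. By Girsanov's theorem for random measures (Jacod--Shiryaev, III.3), under $\mathbb Q$ the compensator of $\mathcal N^N$ is $\eta\,N\kappa^{N,K}[\mu^N]=N\kappa^{N,K'}[\mu^N]$. Hence for every $F\in\mathscr C_b(\mathscr M_f^+(E))$, the process in \eqref{eq:martingale problem F} with $L^{K',N}$ is a $\mathbb Q$-martingale. The process cannot explode, so these compensator integrals are finite, and it starts from $\mu_0^N$. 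The uniqueness in Proposition~\ref{prop:mart problem} for $K'$ then forces the projection of $\mathbb Q$ onto the coordinate path to equal $\mathbb P^{N,K'}$, restricted to $\mathscr F_t$ for each $t$. This gives local absolute continuity with density $\mathcal M_t$; projecting from the marked extension uses conditional expectation onto the coordinate filtration. If $\eta\ge c>0$, then $\mathcal M_t>0$ a.s. and $1/\mathcal M_t$ is the analogous density of $K$ with respect to $K'$, so the two laws are equivalent.

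For the explicit formula with $K^f$, I substitute $\log\eta=L[f_s](x,y,z)$. The jump term $\int L[f_s]\diff\mathcal N^N$ equals $N$ times the pathwise sum of jumps of $\langle f_s,\mu^N_s\rangle$. Since $f\in\mathscr C_b^{1,0}$ and the path is piecewise constant between finitely many jumps, this sum equals $N\bigl(\langle f_t,\mu_t\rangle-\langle f_0,\mu_0\rangle-\int_0^t\langle\partial_sf_s,\mu_s\rangle\diff s\bigr)$. This is exactly the pathwise identity behind \eqref{eq:mart problem g}, without the compensator. Boundedness of $e^{L[f]}\le e^{3\|f\|_\infty}$ puts us in the setting above. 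I expect the main obstacle to be the careful handling of the marked extension and the orientation marks: I must check that the compensator of the oriented counting measure is indeed $N\kappa^{N,K}$, including the diagonal correction $\delta_x/N$, and that uniqueness on the unmarked space transfers to the marked one. Everything else is routine.
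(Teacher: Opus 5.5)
Your proposal is correct and follows the same approach as the paper. The paper gives no details: it simply invokes the standard Radon--Nikodym change-of-measure formula for marked point processes, citing Section~4 of its companion reference. Your argument fills this in faithfully: you bound $\mathcal M^{N,(K'|K)}_t$ deterministically using the at most $n_0-1$ jumps and the $\phi_0$-control of the compensator, apply Girsanov for random measures, and identify the tilted law through the uniqueness in Proposition~\ref{prop:mart problem}.
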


The proof is the standard Radon-Nikodym change-of-measure formula for marked point processes;
see also \cite[Section~4]{ldpsun}.

\subsection{Exponential tightness}

Fix a bounded complete metric $d$ inducing weak convergence on
$\mathscr M_f^+(E)$ and a countable convergence-determining family
$(f_j)_{j\geq1}\subset\mathscr C_b(E)$ with
$\|f_j\|_\infty\leq1$. On each compact subset of
$\mathscr M_f^+(E)$, these coordinates generate the weak topology; hence
finitely many of them control the metric uniformly to any prescribed
accuracy.

\begin{lemma}[Exponential tightness]\label{lemma:exp tight}
Under Assumptions~\ref{assu:kernel} and \ref{assu:initial}, the laws of
$\mu^N$ are exponentially tight in
$\mathscr D_T(\mathscr M_f^+(E))$.
\end{lemma}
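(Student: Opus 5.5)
Exponential tightness in $\mathscr D_T(\mathscr M_f^+(E))$ needs two things, following the exponential-scale version of the Ethier--Kurtz criterion. First, \emph{exponential compact containment}: a compact $\mathcal K\subset\mathscr M_f^+(E)$ with $\limsup_N N^{-1}\log\mathbb P(\mu_t^N\notin\mathcal K\text{ for some }t)\le-M$ for each $M$. Second, exponential tightness of the real projections $\langle f_j,\mu^N\rangle$ for each $j$. I will check the second through an exponential Aldous condition: for every $\varepsilon>0$,
\[
\lim_{\delta\to0}\limsup_{N\to\infty}\sup_{\tau_N}\frac1N\log\mathbb P\Bigl(\sup_{0\le\theta\le\delta}\bigl|\langle f_j,\mu^N_{(\tau_N+\theta)\wedge T}-\mu^N_{\tau_N}\rangle\bigr|>\varepsilon\Bigr)=-\infty ,
\]
where the supremum runs over stopping times $\tau_N\le T$.

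\textbf{Compact containment.} This part is deterministic. By Lemma~\ref{lemmadta}, for $N>N_0$ we have $\mu^N\in\mathscr D_T^a$ almost surely. So $\mu_t^N\in\mathcal K_a=\{\rho:\langle\Phi,\rho\otimes\rho\rangle\le a\}$ for all $t$, and $\mathcal K_a$ is compact by the argument at the start of the proof of Theorem~\ref{thm:LLN}. The probability of leaving $\mathcal K_a$ is therefore zero.

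Once both conditions hold, the rest is standard. Since the $f_j$ generate the weak topology on $\mathcal K_a$, finitely many of them control the metric $d$ uniformly there. Hence the modulus $w'(\mu^N,\delta)$ for $d$ is dominated by finitely many coordinate moduli plus an arbitrarily small error. The exponential versions of these moduli then give exponentially tight sets of the form $\{\mu:\mu_t\in\mathcal K_a\ \forall t,\ w'(\mu,\delta_k)\le 1/k\ \forall k\}$.

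\textbf{Aldous increments.} Fix $f=f_j$ and a stopping time $\tau=\tau_N$. For $\lambda>0$ I will use the exponential martingale of Lemma~\ref{lemma:coupling} with the time-independent test function $\lambda f$, restarted at $\tau$:
\[
\mathcal M^{N,(K^{\lambda f}|K)}_{\tau,(\tau+\theta)\wedge T}=\exp\Bigl\{N\langle\lambda f,\mu_{(\tau+\theta)\wedge T}-\mu_\tau\rangle-N\!\int_\tau^{(\tau+\theta)\wedge T}\!\!\int(e^{\lambda L[f]}-1)\,\diff\kappa^{N,K}[\mu^N]\Bigr\}.
\]
The compensator term is bounded by $e^{3\lambda}\cdot\tfrac12\langle\phi_0,\mu_s^N\otimes\mu_s^N\rangle\le \tfrac12 e^{3\lambda}a$ per unit time, so over a window of length $\delta$ it is at most $\tfrac12 e^{3\lambda}a\delta$. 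Optional stopping gives $\mathbb E[\mathcal M_{\tau,\tau+\theta}]\le1$. Then Chebyshev yields
\[
\mathbb P\bigl(\langle f,\mu_{\tau+\theta}-\mu_\tau\rangle>\varepsilon\bigr)\le \exp\{-N(\lambda\varepsilon-\tfrac12e^{3\lambda}a\delta)\}.
\]
The same bound holds for $-f$. Letting $\delta\to0$ and then $\lambda\to\infty$ makes the exponent tend to $-\infty$.

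\textbf{Main obstacle.} The hard step is moving the supremum over $\theta\in[0,\delta]$ inside the probability. The bound above is for a fixed $\theta$, but Aldous' criterion at exponential scale is usually stated with a sup over $\theta\le\delta$ or over pairs of stopping times. I would handle this with the supermartingale maximal inequality. Define
\[
Z_\theta:=\mathcal M_{\tau,(\tau+\theta)\wedge T}\exp\bigl\{N\tfrac12 e^{3\lambda}a\,\theta\bigr\}\ge\exp\bigl\{N\lambda\langle f,\mu_{\tau+\theta}-\mu_\tau\rangle\bigr\}.
\]
$Z_\theta$ is a nonnegative process with $\mathbb E\,Z_\theta\le e^{Na\delta e^{3\lambda}/2}$ on $[0,\delta]$. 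Applying Doob's maximal inequality to the martingale $\mathcal M$ itself, together with the deterministic compensator bound, gives the same exponential estimate for the supremum. Alternatively, one can apply the estimate at the stopping time $\sigma=\inf\{\theta:|\langle f,\mu_{\tau+\theta}-\mu_\tau\rangle|>\varepsilon\}\wedge\delta$, which is itself a stopping time to which the fixed-time argument applies. Jumps have size at most $3/N$, so no overshoot issue arises at this scale.
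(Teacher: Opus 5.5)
Your core estimates are the paper's. Compact containment is deterministic, from Lemma~\ref{lemmadta} and the compactness of $\mathcal K_a$. The increment bound comes from the density martingale of Lemma~\ref{lemma:coupling} with $\lambda f$, restarted at $\tau_N$, with the compensator controlled by $|L[f]|\le 3$ and $\bar K\le\phi_0\le\Phi$. Your second option for the supremum over $\theta\le\delta$, stopping at the first exceedance time, is exactly the paper's $\sigma_N$, and the maximal-inequality version works equally well.

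\textbf{The gap is the step that passes from coordinates to $d$.} The $J_1$ modulus $w'(\mu,\delta)$ for $d$ is \emph{not} dominated by the individual moduli $w'(\langle f_j,\mu\rangle,\delta)$, because the optimal partitions for different $j$ need not agree. For example, take $E=\{x_1,x_2,x_3\}$ and deterministic paths $\mu^N$ equal to $\delta_{x_1}$ on $[0,1)$, to $\delta_{x_2}$ on $[1,1+1/N)$, and to $\delta_{x_3}$ afterwards. Their projections against $1$, $\mathbbm{1}_{\{x_1\}}$ and $\mathbbm{1}_{\{x_3\}}$ each have at most one jump and converge in $J_1$, and these three functionals determine $\mu$. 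Yet $(\mu^N)$ is not relatively compact in $J_1$, since the two jumps merge. So ``exponential tightness of each real projection'' is not a sufficient second condition; projection-type criteria need a test family closed under addition.

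\textbf{The fix is what the paper does.} On $\mathcal K_a$, choose $f_1,\dots,f_J$ and $r_1,\dots,r_J$ that control $d$. Apply your scalar estimate, with $\varepsilon$ replaced by $r_j$, to every $f_j$ at the \emph{same} stopping time $\tau_N$, and take a union bound over these finitely many $j$. This yields the exponential Aldous condition for $d(\mu^N_{(\tau_N+u)\wedge T},\mu^N_{\tau_N})$ itself. After that, the exponential tightness criterion applies, or equivalently the usual chaining of successive exceedance times into an exponential $w'$ bound. Alternatively, your estimate holds for any bounded $f$ with $3\|f\|_\infty$ in place of $3$, so you may apply it to the additive closure of $\{f_j\}$.
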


\begin{proof}
We verify exponential compact containment and the exponential Aldous
condition.  First, we have proved in the proof of Theorem~\ref{thm:LLN} that the set
\[
\mathcal K_a:=\{\rho\in\mathscr M_f^+(E):
\langle\Phi,\rho\otimes\rho\rangle\leq a\}
\]
is compact. Lemma~\ref{lemmadta} therefore gives the deterministic
compact-containment bound
\[
\mathbb P^{N,K}(\mu_t^N\in\mathcal K_a\text{ for all }t\leq T)=1
\qquad(N>N_0).
\]

Let $f\in\mathscr C_b(E)$ with $\|f\|_\infty\leq1$, and let $\tau_N$ be a
stopping time bounded by $T$ (with the process stopped at $T$).
Let
\[
\sigma_N:=\inf\left\{u\in[0,\delta]:
\langle f,\mu_{(\tau_N+u)\wedge T}^N-\mu_{\tau_N}^N\rangle
>\varepsilon\right\}\wedge\delta.
\]
Apply optional sampling, conditionally on $\mathscr F_{\tau_N}$, to the
Radon-Nikodym density martingale on the random interval
$[\tau_N,(\tau_N+\sigma_N)\wedge T]$.  Since
$|L[f]|\leq3$ and $\bar K\leq\phi_0\leq\Phi$, this gives
\[
\begin{aligned}
&\mathbb P^{N,K}\left(
\sup_{0\leq u\leq\delta}
|\langle f,\mu_{(\tau_N+u)\wedge T}^N-\mu_{\tau_N}^N\rangle|
>\varepsilon\right)\\
&\qquad\leq
2\exp\left\{-N\lambda\varepsilon
+\frac{Na\delta}{2}(e^{3\lambda}+1)\right\}.
\end{aligned}
\]
The factor $2$ comes from applying the same stopped argument to $-f$.
Given $L>0$ and $\varepsilon>0$, choose $\lambda$ first and then
$\delta$ so that the right-hand side has logarithmic rate at most $-L$.

Since
$\mathcal K_a$ is compact and the coordinates $\rho\mapsto
\langle f_j,\rho\rangle$ generate its topology, for every
$\varepsilon>0$ there are $J<\infty$ and $r_1,\ldots,r_J>0$ such that
for $\rho,\rho'\in\mathcal K_a$,
\[
|\langle f_j,\rho-\rho'\rangle|\leq r_j\quad(1\leq j\leq J)
\quad\Longrightarrow\quad d(\rho,\rho')\leq\varepsilon.
\]
Applying the scalar estimate to these finitely many coordinates and using
a union bound yields
\[
\lim_{\delta\downarrow0}\limsup_{N\to\infty}\frac1N\log
\sup_{\tau_N}\mathbb P^{N,K}\left(
\sup_{0\leq u\leq\delta}
   d(\mu_{(\tau_N+u)\wedge T}^N,\mu_{\tau_N}^N)>\varepsilon\right)
=-\infty.
\]
This is the exponential Aldous condition.  The exponential-tightness
criterion of \cite[Theorem~4.1]{fengkurtz}, together with exponential
compact containment, proves the claim.
\end{proof}

\subsection{Upper bound}

Fix $f\in\mathscr C_{\bar E}^{1,0}$ and define
\[ J_T(\pi,f)=\croc{f_T,\pi_T} - \croc{f_0,\pi_0} - \int^T_0 \croc{\partial_sf_s,\pi_s}\diff s-\croc{{e^{L[\bar{f}]}-1},\Lambda_\pi^{K,T}}.\]

\begin{prop}\label{prop:upper another}
    Assume the conditions of Theorem~\ref{thm:LDP upper} hold.
    Let \(\pi \in \mathscr{D}_T(\mathscr{M}_f^+(E))\).

    \begin{enumerate}
        \item If \(\iota_{\nu}(\pi_0) + \mathcal{R}^K_{\mathrm{upper}}(\pi) < \infty\), then for every \(\delta>0\) there exists an open neighborhood \(V\) of \(\pi\) such that
              \[
                  \limsup_{N\to\infty} \frac{1}{N}\log \mathbb{P}^{N,K}(V)
                  \le -\bigl(\iota_{\nu}(\pi_0) + \mathcal{R}^K_{\mathrm{upper}}(\pi)\bigr) + \delta .
              \]
        \item If \(\iota_{\nu}(\pi_0) + \mathcal{R}^K_{\mathrm{upper}}(\pi) = \infty\), then for every \(M>0\) there exists an open neighborhood \(U\) of \(\pi\) such that
              \[
                  \limsup_{N\to\infty} \frac{1}{N}\log \mathbb{P}^{N,K}(U) \le -M .
              \]
    \end{enumerate}
\end{prop}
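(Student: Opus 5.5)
The plan is the standard local exponential-martingale argument of \cite{ldpsun}, made to work on $\mathscr D_T^a$ by means of Proposition~\ref{prop:main continuous proposition}. We split according to whether $\pi_0=\nu$.

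\emph{Case $\pi_0\neq\nu$.} Choose $h\in\mathscr C_b(E)$ with $|\langle h,\pi_0-\nu\rangle|=3r>0$. The set $U=\{\rho:|\langle h,\rho_0-\pi_0\rangle|<r\}$ is open in $J_1$, because time changes fix $0$. Since $\mu_0^N\to\nu$ deterministically, $\mathbb P^{N,K}(U)=0$ for all large $N$, which settles both items.

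\emph{Case $\pi_0=\nu$.} Fix $f\in\mathscr C_{\bar E}^{1,0}$. We take $f$ with $J_T(\pi,f)\ge\mathcal R^K_{\rm upper}(\pi)-\delta/3$ in item~1, or with $J_T(\pi,f)\ge M+1$ in item~2; the latter is possible when $\pi\in\mathscr A_\phi$. If $\pi\notin\mathscr A_\phi$, then either $\pi\notin\mathscr D_T^a$, and the open set $(\mathscr D_T^a)^c$ has probability $0$ by Lemma~\ref{lemmadta}, or a separate argument is needed (see below).

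Lemma~\ref{lemma:coupling} gives the identity
\[
\mathbb E^{N,K}\bigl[\exp\{N\,J^N_T(\mu^N,f)\}\bigr]=1,
\]
where $J^N_T$ is the finite-$N$ functional built with $\kappa^{N,K}[\mu^N]$. For any Borel $V$ this yields
\[
\mathbb P^{N,K}(V)\le\exp\Bigl\{-N\inf_{\rho\in V\cap\mathscr D_T^a}J^N_T(\rho,f)\Bigr\}.
\]
Two comparisons between $J^N_T$ and $J_T$ are needed. First, the finite system conserves $\langle\phi(s,\cdot,y),\mu^N_t\rangle$, so the gel part of $\Lambda_{\mu^N}^{K,T}$ vanishes. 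Hence $\langle e^{L[\bar f]}-1,\kappa^{N,K}[\mu^N]\rangle$ equals $\langle e^{L[\bar f]}-1,\Lambda_{\mu^N}^{K,T}\rangle$ up to the diagonal term. Second, that diagonal term is bounded by $e^{3\|f\|}\delta'$ via Lemma~\ref{lemma: cross term to 0}. Consequently it suffices to find a neighbourhood $V$ of $\pi$ on which
\[
\langle f_T,\rho_T\rangle-\langle f_0,\rho_0\rangle-\int_0^T\langle\partial_tf_t,\rho_t\rangle\diff t-\langle e^{L[\bar f]}-1,\Lambda_\rho^{K,T}\rangle
\]
is at least $J_T(\pi,f)-\delta/3$ for all $\rho\in V\cap\mathscr D_T^a$ with $\rho_0=\mu_0^N$.

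\emph{Main obstacle: lower semicontinuity of $\rho\mapsto\langle e^{L[\bar f]}-1,\Lambda^{K,T}_\rho\rangle$ near $\pi$ in $J_1$.} Proposition~\ref{prop:main continuous proposition} requires uniform-in-time weak convergence. It also requires the initial-data compatibility, which holds here because $\rho_0=\mu_0^N$ and Assumption~\ref{assu:initial-2}(c) applies. Uniform-in-time convergence is not $J_1$ convergence, however, unless the limit $\pi$ is continuous.

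I would therefore first reduce to continuous $\pi$. If $\pi$ has a jump, then by the $C$-tightness estimate in the proof of Lemma~\ref{lemma:exp tight} (jumps of size $\le3/N$, together with the exponential Aldous bound), a small $J_1$ neighbourhood of $\pi$ has super-exponentially small probability. This gives item~2 for such $\pi$, and the same mechanism would handle $\pi\notin\mathscr A_\phi$ that are limits of $\mathscr D^a_T$ paths. For continuous $\pi\in\mathscr D^a_T$, I would argue by contradiction. Suppose there is a sequence $\rho^i\to\pi$ in $J_1$ with $\rho^i\in\mathscr D_T^a$ and $\rho^i_0=\mu^{N_i}_0$ along which the inequality fails. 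Then $\rho^i\to\pi$ uniformly in time for each test function. Proposition~\ref{prop:main continuous proposition}, applied with the continuous function $e^{L[\bar f]}-1\in\mathscr C(X_T)$ (the continuity of $y\mapsto\int\phi\,\diff\nu$ following as in the proof of Theorem~\ref{thm:LLN}), gives convergence of the $\Lambda$ term. The linear terms converge as well, which contradicts the assumed failure.

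Since the relevant quantities are countable sequences in a metrizable space, this yields an open $V$. Combining the steps gives $\limsup N^{-1}\log\mathbb P^{N,K}(V)\le -J_T(\pi,f)+2\delta/3$, which proves both items.
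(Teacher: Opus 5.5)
Your treatment of $\pi_0\neq\nu$, of $\pi\notin\mathscr D_T^a$ and of discontinuous $\pi$ is fine. Your main case is the paper's argument (its Case~5): the exponential martingale, the vanishing gel part, the diagonal bound from Lemma~\ref{lemma: cross term to 0}, and local control of $J_T$ by contradiction via Proposition~\ref{prop:main continuous proposition} and Assumption~\ref{assu:initial-2}(c).

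\textbf{The gap is item~2 for one class of paths.} These are paths $\pi$ that are continuous, lie in $\mathscr D_T^a$ and start at $\nu$, but are not in $\mathscr A_\phi$ because $G_\pi$ fails to be nonnegative. For such $\pi$, $\mathcal R^K_{\rm upper}(\pi)=+\infty$ holds only by convention. As you note yourself, you cannot then pick $f$ with $J_T(\pi,f)\geq M+1$. The exponential-martingale bound still gives $-\sup_f J_T(\pi,f)$, but it is not clear that this supremum is infinite. Making $f$ very negative where $G_\pi<0$ inflates the gel term, but it also inflates $e^{L[\bar f]}$ in the coagulation part. Your remark that ``the same mechanism'' handles these paths does not work. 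The jump-size/Aldous argument only excludes discontinuous limits and says nothing about continuous ones.

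\textbf{The missing idea is that such a $\pi$ lies outside the closure of reachable finite-particle paths.} Suppose reachable paths $\rho^k$ from $\mu_0^{N_k}$, with $N_k\uparrow\infty$, converged to $\pi$ in $J_1$.
\begin{enumerate}
\item Since $\pi$ is continuous, $\rho^k_t\to\pi_t$ weakly for every $t$.
\item The finite system conserves $\langle\phi(s,\cdot,y),\rho^k_t\rangle$ exactly, so it equals $\langle\phi(s,\cdot,y),\mu_0^{N_k}\rangle$.
\item Portmanteau, applied to the nonnegative continuous function $\phi(s,\cdot,y)$, together with Assumption~\ref{assu:initial-2}(c) for the compact singleton $C'=\{y\}$, gives
\[
\int_E\phi(s,x,y)\,\pi_t(\diff x)\leq\int_E\phi(s,x,y)\,\nu(\diff x)
\quad\text{for all } s,t,y .
\]
Hence $G_\pi\geq0$.
\item The integrability conditions follow from $\pi\in\mathscr D_T^a$ and $\bar K,\phi\leq\phi_0$.
\end{enumerate}
So $\pi\in\mathscr A_\phi$, a contradiction. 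Therefore some neighbourhood of $\pi$ contains no reachable path for all large $N$, and its probability is zero. This is exactly how the paper closes the case (its Case~4), and your proof needs this step to establish item~2.
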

\begin{proof}

    Case 1: If $\pi\notin\mathscr C([0,T],\mathscr M_f^+(E))$, the
    closedness of the continuous paths in the $J_1$ topology provides an
    open neighborhood $V$ of $\pi$ separated from paths whose largest
    jump is sufficiently small.  On the compact set $\mathcal K_a$, each
    jump of every coordinate $\langle f_j,\mu^N\rangle$ is at most
    $3/N$.  Compactness and the convergence-determining property imply
    that the largest $d$-jump tends to zero uniformly.  Hence
    $\mathbb P^{N,K}(V)=0$ for all sufficiently large $N$.

    Case 2: If $\pi\notin \mathscr{D}_T^a$, we can take an open neighborhood $V$ of $\pi$ such that $V\cap \mathscr{D}_T^a=\emptyset$. Consequently, $\mathbb{P}^{N,K}(V)=0$ for all $N>N_0$, which implies the conclusion.

    Case 3: If $\pi_0\neq\nu$, take a neighborhood whose initial values
    lie in the $d$-ball centered at $\pi_0$ with radius
    $d(\pi_0,\nu)/2$.  Since $d(\mu_0^N,\nu)\to0$, this neighborhood has
    probability zero for all sufficiently large $N$.

    Case 4: Suppose that $\pi\notin\mathscr A_\phi$.  We claim that an
    open neighborhood of $\pi$ contains no finite-particle path for all
    sufficiently large $N$.  Otherwise there would exist $N_k\uparrow
    \infty$ and paths $\rho^k$, reachable from $\mu_0^{N_k}$, such that
    $\rho^k\to\pi$ in the $J_1$ topology.  Cases 1--3 have already been
    excluded, so $\pi$ is continuous, belongs to $\mathscr D_T^a$, and
    starts from $\nu$.  Hence $\rho_t^k\to\pi_t$ weakly for every $t$.
    Conservation in the finite system and Portmanteau's theorem give,
    for every $t,s\in[0,T]$ and $y\in E$,
    \begin{align*}
    \int_E\phi(s,x,y)\pi_t(\diff x)
    &\leq\liminf_{k\to\infty}
      \int_E\phi(s,x,y)\rho_t^k(\diff x)\\
    &=\lim_{k\to\infty}
      \int_E\phi(s,x,y)\mu_0^{N_k}(\diff x)
      =\int_E\phi(s,x,y)\nu(\diff x).
    \end{align*}
    The last equality is Assumption~\ref{assu:initial-2}(c), applied to
    the compact singleton $\{y\}$.  Since $\pi\in\mathscr D_T^a$ and
    $\bar K,\phi\leq\phi_0$, the two absolute-integrability conditions in
    the definition of $\mathscr A_\phi$ also hold.  Thus $G_\pi$ is
    nonnegative and $\pi\in\mathscr A_\phi$, a contradiction.  The claimed
    neighborhood therefore has probability zero for all large $N$.

    Case 5:
  If $\iota_{\nu}(\pi_0)+\mathcal{R}^K_{\rm{upper}}(\pi)<\infty$, then by \eqref{eq:upper-rate-compact}, we can find $f\in\mathscr C_{\bar E}^{1,0}$ such that
    \[
    J_T(\pi,f)>\mathcal{R}^K_{\rm{upper}}(\pi)-\frac{\delta}{3}.
    \]
Consider the $f$-perturbed martingale $\mathcal{M}_T^{N,(K^f|K)}$.
Lemma~\ref{lemma: cross term to 0} and conservation of $\phi$ imply that,
for all sufficiently large $N$ and any measurable $V$,
 \[
    \begin{aligned}
        1=& \mathbb{E}^{N,K} \mathcal{M}_T^{N,(K^f|K)}\\
        =& \mathbb{E}^{N,K} \exp\Bigg\{N\Big[J_T(\mu,f)-J_T(\pi,f)+J_T(\pi,f)\\
        &\qquad +\frac{1}{2N} \int_0^T \int_{E\times E} \left(e^{f_t(z)-2f_t(x)}-1\right)K(t,x,x,\diff z) \mu_t(\diff x) \diff t\Big]\Bigg\}\\
        \geq& \mathbb{E}^{N,K}\Big[ \exp\{-N|J_T(\mu,f)-J_T(\pi,f)|+ N J_T(\pi,f)-\frac{N\delta}{3}\} \ind{\mu\in V}\Big].
    \end{aligned}
    \]

    We next justify the required local estimate without asserting ordinary
    continuity on all of $\mathscr D_T^a$.  If no such neighborhood and
    $N'$ existed, we could choose $N_k\uparrow\infty$ and reachable paths
    $\rho^k\to\pi$ for which
    $|J_T(\rho^k,f)-J_T(\pi,f)|>\delta/3$.  Since $\pi$ is continuous,
    the $J_1$ convergence is uniform after testing against bounded
    continuous functions.  Proposition~\ref{prop:main continuous proposition}
    and Assumption~\ref{assu:initial-2}(c) then imply
    $J_T(\rho^k,f)\to J_T(\pi,f)$, a contradiction.  Thus there are an
    open neighborhood $V=V(\delta,f)$ of $\pi$ and $N'\in\mathbb N$ such
    that every reachable path $\mu\in V$ with $N>N'$ satisfies
    \[
    |J_T(\mu,f)-J_T(\pi,f)|\leq \frac{\delta}{3}.
    \]
  Therefore,
    \[
    \limsup_{N\to\infty} \frac{1}{N} \log \mathbb{P}^{N,K}(V)\leq -J_T(\pi,f)+\frac{2\delta}{3}\leq -(\iota_{\nu}(\pi_0) + \mathcal{R}^K_{\rm{upper}}(\pi))+\delta.
    \]

    If $\mathcal{R}^K_{\rm{upper}}(\pi)=\infty$, choose
    $f\in\mathscr C_{\bar E}^{1,0}$ with $J_T(\pi,f)\geq M+1$.
    Repeating the argument gives an open neighborhood $U$ such that
    \[
    \limsup_{N\to\infty}\frac{1}{N} \log \mathbb{P}^{N,K}(U)\leq -M.
    \]
\end{proof}

\begin{proof}[Proof of Theorem~\ref{thm:LDP upper}]
    Let \(C \subset \mathscr{D}_T(\mathscr{M}_f^+(E))\) be compact. 
    Write $I(\pi)=\iota_\nu(\pi_0)+
    \mathcal R^K_{\mathrm{upper}}(\pi)$.  For each $\pi\in C$,
    Proposition~\ref{prop:upper another} provides an open neighborhood
    $W_\pi$ satisfying
    \[
        \limsup_{N\to\infty} \frac{1}{N}\log \mathbb{P}^{N,K}(W_\pi)
        \le 
        \begin{cases}
            -I(\pi)+\delta, & \text{if }I(\pi)<\infty,\\[4pt]
            -M, & \text{if }I(\pi)=\infty,
        \end{cases}
    \]
    for any prescribed \(\delta, M > 0\).

    The collection \(\{W_\pi : \pi \in C\}\) is an open cover of \(C\). By compactness, there exists a finite subcover
    \[
        C \subset \bigcup_{i=1}^{k} V_{\pi^{(i)}} \;\cup\; \bigcup_{j=1}^{l} U_{\tilde{\pi}^{(j)}},
    \]
    where each \(V_{\pi^{(i)}}\) (resp. \(U_{\tilde{\pi}^{(j)}}\)) is a neighbourhood of a point \(\pi^{(i)}\) with finite (resp. infinite) rate \(\iota_{\nu}(\pi^{(i)}_0) + \mathcal{R}^K_{\mathrm{upper}}(\pi^{(i)})\) (resp. \(\iota_{\nu}(\tilde{\pi}^{(j)}_0) + \mathcal{R}^K_{\mathrm{upper}}(\tilde{\pi}^{(j)})\)).

    Using the finite-subadditivity of the large-deviation upper bound,
    \begin{multline*}
        \limsup_{N\to\infty} \frac{1}{N}\log \mathbb{P}^{N,K}(C)\\
        \le \max\Bigl\{
            \max_{1\le i \le k} \; \limsup_{N\to\infty} \frac{1}{N}\log \mathbb{P}^{N,K}(V_{\pi^{(i)}}), \;
            \max_{1\le j \le l} \; \limsup_{N\to\infty} \frac{1}{N}\log \mathbb{P}^{N,K}(U_{\tilde{\pi}^{(j)}})
        \Bigr\} \\
        \le \max\Bigl\{
            - \min_{1\le i \le k} \bigl\{\iota_{\nu}(\pi^{(i)}_0) + \mathcal{R}^K_{\mathrm{upper}}(\pi^{(i)})\bigr\} + \delta, \;
            -M
        \Bigr\} \\
        \le \max\Bigl\{
            - \inf_{\pi \in C} \bigl\{\iota_{\nu}(\pi_0) + \mathcal{R}^K_{\mathrm{upper}}(\pi)\bigr\} + \delta, \;
            -M
        \Bigr\}.
    \end{multline*}

    Letting \(M \to \infty\) and then \(\delta \to 0\) yields
    \[
        \limsup_{N\to\infty} \frac{1}{N}\log \mathbb{P}^{N,K}(C)
        \le - \inf_{\pi \in C} \bigl\{\iota_{\nu}(\pi_0) + \mathcal{R}^K_{\mathrm{upper}}(\pi)\bigr\}.
    \]

    Together with the exponential tightness established in Lemma~\ref{lemma:exp tight}, this upper bound for compact sets extends to all closed subsets of \(\mathscr{D}_T(\mathscr{M}_f^+(E))\), completing the proof of the upper bound.
\end{proof}

\subsection{Proof of the lower bound}

\begin{proof}[Proof of Theorem~\ref{thm:LDP lower}]
Fix $\pi\in O\cap H_0^K[\nu]$ and
$\eta\in\mathcal A^K_{\rm uniq}(\pi)$.  Write
$Q_N:=\mathbb P^{N,\hat\eta K}$ and choose an open neighborhood
$B$ of $\pi$ whose closure is contained in $O$.  By
Theorem~\ref{thm:LLN} applied to $\hat\eta K$, every subsequential limit
under $Q_N$ solves the tilted
Flory equation.  The uniqueness in the definition of
$\mathcal A^K_{\rm uniq}(\pi)$ therefore gives
\begin{equation}\label{eq:tilted-concentration}
Q_N(B)\longrightarrow1.
\end{equation}

Set $c_\eta:=\inf_{X_T}\eta>0$. Thus $\log\eta$ and
$\tau^*(\eta-1)=\eta\log\eta-\eta+1$ are bounded continuous functions.
After shrinking $B$ if necessary, we have
\begin{equation}\label{eq:lower-cost-continuity}
\left|\langle\tau^*(\eta-1),\Lambda_\rho^{K,T}\rangle
-\langle\tau^*(\eta-1),\Lambda_\pi^{K,T}\rangle\right|\leq\delta
\end{equation}
for every path $\rho\in B\cap\mathscr D_T^a$ reachable by the
$N$-particle system from $\mu_0^N$, for all sufficiently large $N$.
Indeed, failure of this assertion would produce $N_k\uparrow\infty$ and
reachable $\rho^k\to\pi$ violating \eqref{eq:lower-cost-continuity}.
The continuity of $\pi$, Assumption~\ref{assu:initial-2}(c), and
Proposition~\ref{prop:main continuous proposition}, applied to the
bounded continuous test function $\tau^*(\eta-1)$, give a contradiction.

Under $Q_N$, let
\[
Z_N:=\int_{[0,T]\times E^2\times E}
\log\eta(t,x,y,z)\,
\widetilde{\mathcal N}^{N,\hat\eta K}
(\diff t\diff x\diff y\diff z).
\]
Its predictable quadratic variation satisfies
\[
\mathbb E^{Q_N}[Z_N^2]
=N\mathbb E^{Q_N}
\langle\eta(\log\eta)^2,\kappa^{N,K}[\mu^N]\rangle
\leq C_\eta NaT.
\]
Consequently, $Q_N(|Z_N|>N\delta)\to0$. Conservation of $\phi$ in the
finite system and Lemma~\ref{lemma: cross term to 0}, applied under the
tilted law (using $K\leq c_\eta^{-1}\hat\eta K$), also give
\begin{equation}\label{eq:kappa-lambda-cost}
Q_N\left(\left|
\langle\tau^*(\eta-1),\Lambda_{\mu^N}^{K,T}\rangle
-\langle\tau^*(\eta-1),\kappa^{N,K}[\mu^N]\rangle
\right|>\delta\right)\longrightarrow0.
\end{equation}
Let $V_N$ be the intersection of the complementary events in the last two
estimates. By \eqref{eq:tilted-concentration},
$Q_N(B\cap V_N)\to1$.

Because $\eta$ is bounded away from zero, Lemma~\ref{lemma:coupling}
may be used in the reverse direction. On $B\cap V_N$,
\[
\begin{aligned}
\frac{\diff\mathbb P^{N,K}}{\diff Q_N}
&=\exp\left\{-Z_N-N
\langle\tau^*(\eta-1),\kappa^{N,K}[\mu^N]\rangle\right\}\\
&\geq\exp\left\{-N\langle\tau^*(\eta-1),
\Lambda_\pi^{K,T}\rangle-3N\delta\right\}.
\end{aligned}
\]
It follows that
\[
\liminf_{N\to\infty}\frac1N\log\mathbb P^{N,K}(O)
\geq-\langle\tau^*(\eta-1),\Lambda_\pi^{K,T}\rangle-3\delta.
\]
Letting $\delta\downarrow0$, then taking the infimum first over
$\eta\in\mathcal A^K_{\rm uniq}(\pi)$ and then over
$\pi\in O\cap H_0^K[\nu]$, proves the assertion.
\end{proof}

\subsection{Dual representation of the upper rate}

\begin{proof}[Proof of Theorem~\ref{thm:another representation}]
The moment assumption and $\bar K,\phi\leq\phi_0$ imply that
$\Lambda:=\Lambda_\pi^{K,T}$ is a finite positive measure. Since
$e^u-1=u+\tau(u)$, definition~\eqref{eq:upper-rate-compact} becomes
\begin{equation}\label{eq:upper-rate-tau}
\mathcal R^K_{\rm upper}(\pi)
=\sup_{g\in\mathscr C_{\bar E}^{1,0}}
\left\{\gamma_T^K(\pi,g)-
\langle\tau(L[\bar g]),\Lambda\rangle\right\}.
\end{equation}
Let $\mathcal G=\mathscr C_{\bar E}^{1,0}$, equipped with the norm
$\|g\|_{\mathcal G}=\|\bar g\|_\infty+
\|\partial_t\bar g\|_\infty$, and define the continuous linear map
\[
A:\mathcal G\longrightarrow\mathscr C(X_T),
\qquad Ag=L[\bar g].
\]
The moment assumption makes $b(g):=\gamma_T^K(\pi,g)$ a continuous
linear functional on $\mathcal G$.  Also set
\[
F(h):=\int_{X_T}\tau(h)\,\diff\Lambda,
\qquad h\in\mathscr C(X_T).
\]
Because $\Lambda$ is finite and $\tau$ is finite and continuous on
$\mathbb R$, $F$ is a finite continuous convex functional on the Banach
space $\mathscr C(X_T)$.  Thus \eqref{eq:upper-rate-tau} is
\[
\mathcal R^K_{\rm upper}(\pi)
=\sup_{g\in\mathcal G}\{b(g)-F(Ag)\}.
\]

The dual of $\mathscr C(X_T)$ is the space of finite signed Radon
measures.  If $\theta=z\Lambda+\theta^s$ is the Lebesgue decomposition,
the conjugate of $F$ is
\begin{equation}\label{eq:integral-conjugate}
F^*(\theta)=\int_{X_T}\tau^*(z)\,\diff\Lambda
+\int_{X_T}\tau^\infty\!\left(
\frac{\diff\theta^s}{\diff|\theta^s|}\right)\diff|\theta^s|,
\end{equation}
where $\tau^\infty$ is the recession function.  Replacing $X_T$ by the
compact support of $\Lambda$ if necessary, this is the
integral-functional conjugacy theorem of \cite[Theorem~5]{rock2}.  Here
\[
\tau^\infty(u)=
\begin{cases}
+\infty,&u>0,\\
0,&u=0,\\
-u,&u<0.
\end{cases}
\]
Since the polar density of a real signed measure equals $1$ or $-1$
$|\theta^s|$-almost everywhere, \eqref{eq:integral-conjugate} is exactly
$\mathcal J_\Lambda(\theta)$.

Finally, Fenchel--Rockafellar duality applied to $F\circ A-b$ gives
\[
\sup_{g\in\mathcal G}\{b(g)-F(Ag)\}
=\inf\{F^*(\theta):A^*\theta=b\}.
\]
There is no duality gap because $F$ is continuous at $A0=0$; this is the
continuity qualification in the Fenchel duality theorem
\cite[Section~31]{rock-convex}.  The constraint $A^*\theta=b$ is
precisely the defining constraint of
$\mathcal O_{\mathcal M}^K[\pi]$.  This proves
\eqref{eq:rate-representation}, including the case in which either side
is infinite.  Restricting the same formula to $\theta=z\Lambda$ gives
the stated absolutely continuous specialization and the finiteness
criterion.
\end{proof}



\section*{Acknowledgments}
This work is supported by the National Key R\&D Program of China (No. 2022YFA1006500) and by the National Natural Science Foundation of China (No. 12401171).
\bibliographystyle{amsplain}
\bibliography{ref}
\bigskip


\end{document}